

\documentclass[smallextended,envcountsect]{svjour3}
\smartqed
\usepackage{graphicx}
\journalname{}


\usepackage{amsmath}
\usepackage{amssymb}
\usepackage{algpseudocode}
\usepackage{algorithm}
\usepackage{mathabx}

\renewcommand{\H}{\mathcal{H}}
\newcommand{\R}{\mathbb{R}}
\newcommand{\G}{\mathcal{G}}

\newcommand{\C}{\mathcal{C}}

\newcommand{\ran}{\ensuremath{\operatorname{ran}}}

\newcommand{\dom}{\operatorname{dom}}
\newcommand{\prox}{\operatorname{prox}}

\newcommand{\sgn}{\operatorname{sgn}}
\newcommand{\Id}{\operatorname{Id}}

\begin{document}

\title{A Projected Variable Smoothing for Weakly Convex Optimization and Supremum Functions}
\titlerunning{A Projected Variable Smoothing for Weakly Convex Optimization}

\author{Sergio L\'opez-Rivera, Pedro P\'erez-Aros and Emilio Vilches}

\institute{Sergio L\'opez-Rivera \at 
            Departamento de Ingenier\'ia Matem\'atica, Universidad de Chile,
             Santiago,  Chile. \\ Email: sergio.lopez@dim.uchile.cl
            \and
            Pedro P\'erez-Aros \at
             Departamento de Ingenier\'ia Matem\'atica, Universidad de Chile,
             Santiago,  Chile\\
            Email: pperez@dim.uchile.cl
           \and
            Emilio Vilches \at Instituto de Ciencias de la Ingenier\'ia, 
               Universidad de O'Higgins, 
               Rancagua, Chile\\
               Email: emilio.vilches@uoh.cl
}

\date{Received: date / Accepted: date}

\maketitle

\begin{abstract}
In this paper, we address two main topics. First, we study the problem of minimizing the sum of a smooth function and the composition of a weakly convex function with a linear operator on a closed vector subspace. For this problem, we propose a projected variable smoothing algorithm and establish a complexity bound of $\mathcal{O}(\epsilon^{-3})$ to achieve an $\epsilon$-approximate solution.  Second, we investigate the Moreau envelope and the proximity operator of functions defined as the supremum of weakly convex functions, and we compute the proximity operator in two important cases. In addition, we apply the proposed algorithm for solving a distributionally robust optimization problem, the LASSO with linear constraints, and the max dispersion problem. We illustrate numerical results for the  max dispersion problem.
\end{abstract}
\keywords{Variable smoothing \and weakly convex optimization \and  proximal mapping \and  supremum function \and  projected splitting algorithms}
\subclass{90C26 \and 49J52 \and 65K05}

\section{Introduction}
 In this paper, we consider the following problem 
\begin{equation}\label{2.1}
\min_{x\in V}h(x)+g(Ax),
\end{equation}
where $V\subset\H$ is a closed vector subspace of a real Hilbert space $\H$, $h\colon\H\rightarrow\R$ is a differentiable function with a Lipschitz gradient, $g$ is weakly convex and lower semicontinuous, and $A$ is a nonzero bounded linear operator from $\H$ to a real Hilbert space $\G$. Whenever $V=\H$, the problem has been studied in \cite{bohm2021variable}. When  $g$ is convex, it has been analyzed in \cite{MR4534446}. Additionally, if  $A=\Id$, the problem can be solved using the algorithm proposed in \cite{briceno2015forward}. If $V=\H$, we can also apply the method proposed in \cite{condat2013primal} in the convex case. The class of weakly convex functions is a generalization of the class of convex functions and there are several classes of functions that are weakly convex. For example, every continuously differentiable piecewise quadratic function defined from $\R^{n}$ to $\R$ is weakly convex. Another  example is the composition of a convex Lipschitz continuous function and a smooth function with Lipschitz gradient. We refer to \cite{MR4359983} for more details about this important class of functions.

One of the problems of the form \eqref{2.1} is the constrained LASSO problem \cite{tibshirani2011solution} that combines sparse minimization and least squares under linear constraints. Also, problem \eqref{2.1} appears in PDEs \cite[Section~3]{MR570823}, signal and image processing \cite{Aujol2006,MR1440119}, stochastic traffic theory \cite{MR3703499}, among other fields. In the previous applications, the vector subspace constraint
models intrinsic properties of the solution as non-anticipativity in stochastic problems.

 We are interested in the case when the function $g$ is a supremum function. The supremum function arises in several optimization problems \cite{MR4361979,sun2021robust,MR4362585}. An important problem that involves the supremum function is the called \textit{Distributionally Robust Optimization} (DRO) problem, which assumes that the true probability distribution belongs to a set of distributions called the \textit{ambiguity set}. Optimization is then performed based on the worst-case scenario within this ambiguity set. The DRO model covers the Robust Optimization problem when the ambiguity set includes all possible distributions, and Stochastic Optimization when the ambiguity set consists of a single probability distribution. Another application which appears a weakly convex supremum function is the \textit{max dispersion problem} (or max-min location problem) \cite{jeyakumar2018exact}. 
 
The proposed algorithm for solving \eqref{2.1} is a generalization of the algorithm proposed in \cite{bohm2021variable} and includes a projection onto the vector subspace. In the proposed method we need to compute the proximity operator of the function $g$. To do this, we generalize \cite[Theorem~3.5]{MR4279933}, which allows calculating the proximity operator of the supremum of convex functions.

The remainder of the paper is organized as follows. In Sect.~\ref{sec_pre}, we introduce the notation and the basic definitions. Sections~\ref{sec_alg} and \ref{sec_epoch} describes the proposed algorithm and its convergence properties. As in \cite{bohm2021variable}, the convergence of the algorithm is obtained under the hypothesis of that $g$ is Lipschitz. We prove that if the sequence generated by the algorithm is bounded, then we can eliminate such hypothesis to obtain convergence. In Sect.~\ref{sec_mor_sup}, we prove that, in the context of weakly convex functions, the Moreau envelope of a supremum function is the supremum of Moreau envelopes. As a consequence of this result, we investigate the proximity operator of the supremum of weakly convex functions, and we compute the proximity operator in two important cases. In Sect.~\ref{sec_aplic}, we use the results proved in the previous sections for solving several applications including the DRO problem, the LASSO with linear constraints, and the max dispersion problem. We illustrate numerical results for the max dispersion problem in Section~\ref{sec_num}. Finally, we mention some conclusions in Section~\ref{sec_conc}.
\section{Preliminaries}
\label{sec_pre}
Let $\H$ be a real Hilbert space with inner product $\langle\cdot,\cdot\rangle$ and induced norm $\|\cdot\|$. We denote by $\Gamma_{0}(\H)$ the set of extended valued functions oh $\H$ that are proper, convex, and lower semicontinuous. Moreover, for $\rho\geq 0$, we denote by $\Gamma_{\rho}(\H)$ the set of functions $g$ such that $g+\frac{\rho}{2}\|\cdot\|^{2}\in\Gamma_{0}(\H)$.  We say that a function is \emph{weakly convex} if the function belongs to $\Gamma_{\rho}(\H)$ for some $\rho\geq 0$. For $\mu>0$, the Moreau envelope of an extended valued function $f$ is defined by
$$
f_{\mu}(x):=\inf_{y\in\H}\,f(y)+\frac{1}{2\mu}\|x-y\|^{2}.
$$
If $f\in\Gamma_{\rho}(\H)$ and $\mu<1/\rho$, the above infimum is attained at a unique point and defines the so-called \emph{proximity operator}  $x\mapsto \prox_{\mu f}(x)$ (see, e.g., \cite[Theorem~3.4]{MR1230710}). For a closed set $\mathcal{Q}\subset \H$, the distance function and the projection mapping are defined as follows:
$$
d(x,\mathcal{Q}):=\inf_{y\in \mathcal{Q}}\Vert x-y\Vert \textrm{ and } P_{\mathcal{Q}}(x)=\operatorname{argmin}_{y\in\mathcal{Q}}\|x-y\|.
$$
The subdifferential of a weakly convex function $f$  (see, e.g., \cite{MR1230710}) is defined by
$$
\partial f(x)=\partial( f+\frac{\rho}{2}\Vert \cdot\Vert^2)(x)-\rho x \textrm{ for } x\in \H,
$$
where $\rho\geq 0$ is chosen such that $f+\frac{\rho}{2}\in \Gamma_0(\H)$, and the subdifferential on the right-hand side is understood in the sense of convex analysis. 

\noindent In this paper, we consider the problem 
\begin{align}
\label{prob_w_conv}
\min_{x\in V}h(x)+g(Ax),
\end{align}
where $V\subset\H$ is a closed vector subspace, $h\colon\H\rightarrow\R$ is a differentiable function with a Lipschitz gradient of constant $L_{\nabla h}$, $g\in \Gamma_{\rho}(\H)$ for some $\rho\geq 0$, and $A$ is a nonzero bounded linear operator from $\H$ to a real Hilbert space $\G$. We say that $x^{*}\in\H$ is a critical point of \eqref{prob_w_conv} if
$$
0\in \nabla h(x^{*})+A^{*}\partial g(Ax^{*})+N_{V}(x^{*}).
$$
In the line of \cite{bohm2021variable}, for every $k\geq 1$, we consider the following regularized problem
\begin{align}
\label{prob_w_conv_reg}
\min_{x\in V}F_{k}(x):=h(x)+g_{\mu_{k}}(Ax),
\end{align}
where $\mu_{k}>0$ and $2\rho\mu_{k}\leq 1$. By \cite[Lemma~3.1]{bohm2021variable},  $\nabla F_{k}$ is Lipschitz with constant 
\begin{align}
\label{cons_L_k}
L_{k}:=L_{\nabla h}+\dfrac{\|A\|^{2}}{\mu_{k}},
\end{align}
where $L_{\nabla h}$ is the Lipschitz constant of $\nabla h$.
Note that $x$ is a critical point of \eqref{prob_w_conv_reg} if and only if 
\begin{align*}
0\in \nabla F_{k}(x)+N_{V}(x) \, \Leftrightarrow\, x=P_{V}(x-\gamma\nabla F_{k}(x)) \textrm{ for all } \gamma>0,
\end{align*}
which motivates us to use the algorithm: $x_{k+1}=P_{V}(x_{k}-\gamma_{k}\nabla F_{k}(x_{k}))$.

  \section{Algorithm Projected Variable Smoothing}
\label{sec_alg}
We propose the following algorithm.
\begin{algorithm}[H]
\caption{Projected Variable Smoothing}\label{alg_w_convex_sev}
\begin{algorithmic}
\Require $x_{1}\in V$, $\alpha\in\left]0,1\right[$, and $C>0$ such that $2\rho C\leq 1$.
\For{$k=1,2,\ldots$}
\State Set $\mu_{k}\gets Ck^{-\alpha}$, set $L_{k}$ as in \eqref{cons_L_k}, set $\gamma_{k}\gets 1/L_{k}$.
\State $x_{k+1}\gets P_{V}(x_{k}-\gamma_{k}\nabla F_{k}(x_{k}))$.
\EndFor
\end{algorithmic}
\end{algorithm}
In what follows, we assume that the sequence $(F_{k}(x_{k}))$ is bounded from below by $F^{*}$.  We have the following convergence result, which, for $\alpha=1/3$, $C=(2\rho)^{-1}$, and $V=\H$ recovers \cite[Theorem~4.1]{bohm2021variable}. It is worth emphasizing that by controlling $\alpha$, we can balance the trade-off between satisfying the optimality condition and the convergence of the algorithm.
\begin{theorem}
\label{teo_conv_weak_sev}
In the context of problem~\eqref{prob_w_conv}, assume that $g$ is $L_{g}$-Lipschitz. Then, the sequence defined by Algorithm~\ref{alg_w_convex_sev} satisfies
\begin{equation*}
\begin{aligned}
\min_{1\leq j\leq k}d(-\nabla h(x_{j}), A^{*}\partial g(\prox_{\mu_{j}g}(Ax_{j}))+N_{V}(x_{j}))&\leq k^{\frac{\alpha-1}{2}}\widetilde{C} \,\textrm{ for all } k\geq 1, \\
\|Ax_{k}-\prox_{\mu_{k}g}(Ax_{k})\| &\leq k^{-\alpha}C L_{g} \,\textrm{ for all } k\geq 1,
\end{aligned}
\end{equation*}
where $\widetilde{C}:=\dfrac{\sqrt{2}\sqrt{L_{\nabla h}+C^{-1}\|A\|^{2}}}{\sqrt{2^{1-\alpha}-1}}\sqrt{F_{1}(x_{1})-F^{*}+C L_{g}^{2}}$.
\end{theorem}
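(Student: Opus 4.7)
The plan is to adapt the variable-smoothing analysis of \cite{bohm2021variable} to the projected setting, exploiting the key fact that for the closed subspace $V$ the normal cone $N_V$ is constant on $V$ and equal to $V^{\perp}$. The proof has three movements: a projected descent inequality for $F_k$, a smoothing-update inequality controlled by the Lipschitz constant of $g$, and an identification of $L_k\|x_{k+1}-x_k\|$ with the stated optimality residual, from which the two announced bounds follow after telescoping.

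First, I would derive the descent inequality
\begin{equation*}
F_k(x_{k+1}) \leq F_k(x_k) - \tfrac{L_k}{2}\|x_{k+1}-x_k\|^2
\end{equation*}
by combining the $L_k$-smoothness of $F_k$ from \eqref{cons_L_k} with the variational inequality for $P_V$ applied at the test point $x_k\in V$ and the choice $\gamma_k = 1/L_k$. Second, using the standard facts $g_\mu \leq g$ and $g - g_\mu \leq \tfrac{L_g^2}{2}\mu$ (both valid in the weakly convex regime since $\mu_k<1/\rho$), I would obtain $F_{k+1}(x_{k+1}) \leq F_k(x_{k+1}) + \tfrac{L_g^2}{2}\mu_k$. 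Chaining these two estimates and telescoping from $j=1$ to $k$ yields
\begin{equation*}
\sum_{j=1}^k L_j\|x_{j+1}-x_j\|^2 \leq 2\bigl(F_1(x_1)-F^{*}\bigr) + L_g^2 C\sum_{j=1}^k j^{-\alpha},
\end{equation*}
and the dyadic estimate $\sum_{j=1}^k j^{-\alpha} \leq \tfrac{2 k^{1-\alpha}}{2^{1-\alpha}-1}$ is what produces the factor $(2^{1-\alpha}-1)^{-1/2}$ in $\widetilde{C}$.

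For the first announced bound, I would rewrite the projection step as $\tfrac{1}{\gamma_k}(x_k-x_{k+1}) - \nabla F_k(x_k) \in N_V(x_{k+1}) = V^{\perp} = N_V(x_k)$, where the subspace structure of $V$ is essential in order to identify the normal cone at $x_{k+1}$ with the one at $x_k$. Writing $\nabla F_k(x_k) = \nabla h(x_k) + A^{*} v_k$ with $v_k := \mu_k^{-1}\bigl(Ax_k - \prox_{\mu_k g}(Ax_k)\bigr) \in \partial g(\prox_{\mu_k g}(Ax_k))$ (the subgradient identity for the weakly convex Moreau envelope) and rearranging yields the pointwise bound
\begin{equation*}
d\bigl(-\nabla h(x_j),\, A^{*}\partial g(\prox_{\mu_j g}(Ax_j)) + N_V(x_j)\bigr) \leq L_j\|x_{j+1}-x_j\|.
\end{equation*}
An averaging/minimum argument, together with $L_k \leq (L_{\nabla h}+C^{-1}\|A\|^{2})k^\alpha$ and the telescoped sum, then delivers the rate $k^{(\alpha-1)/2}\widetilde{C}$. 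The second bound is a consequence of the same identity: coupling the weakly convex subdifferential inequality at the pair $(Ax_k,\prox_{\mu_k g}(Ax_k))$ with the Lipschitz estimate $|g(Ax_k)-g(\prox_{\mu_k g}(Ax_k))| \leq L_g\|Ax_k-\prox_{\mu_k g}(Ax_k)\|$ forces $\|Ax_k - \prox_{\mu_k g}(Ax_k)\| \leq \mu_k L_g = Ck^{-\alpha}L_g$.

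The main obstacle I anticipate is the final bookkeeping that extracts the $k^{(\alpha-1)/2}$ rate: the cumulative smoothing cost $L_g^{2}\sum_{j=1}^k \mu_j$ grows like $k^{1-\alpha}$, and carefully balancing this growth against $L_k\sim k^\alpha$ in the averaging step is what produces the exact constants appearing in $\widetilde{C}$ and collapses the apparent constant-order contribution down to the announced decaying rate.
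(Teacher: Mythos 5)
Your overall architecture (projected descent inequality, a smoothing-update inequality, identification of the optimality residual with $L_j\|x_{j+1}-x_j\|=\|P_V\nabla F_j(x_j)\|$ via $N_V(x_{k+1})=V^{\perp}=N_V(x_k)$, then telescoping) is the right one and matches the paper, as does your treatment of the second bound via $\|Ax_k-\prox_{\mu_k g}(Ax_k)\|=\mu_k\|\nabla g_{\mu_k}(Ax_k)\|\le\mu_k L_g$. But there is a genuine gap in the second movement. You control the change of objective when the smoothing parameter is updated by the crude estimate $g-g_{\mu}\le\tfrac{L_g^2}{2}\mu$, giving $F_{k+1}(x_{k+1})\le F_k(x_{k+1})+\tfrac{L_g^2}{2}\mu_k$. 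Summing this produces the non-telescoping error $\tfrac{L_g^2}{2}C\sum_{j=1}^k j^{-\alpha}\sim k^{1-\alpha}$, and after dividing by $\sum_j\gamma_j\gtrsim (L_{\nabla h}+C^{-1}\|A\|^2)^{-1}\sum_j j^{-\alpha}$ in the averaging step you are left with
\begin{equation*}
\min_{1\le j\le k}\|P_V\nabla F_j(x_j)\|^2\;\le\;\frac{2(L_{\nabla h}+C^{-1}\|A\|^2)(F_1(x_1)-F^*)}{\sum_{j=1}^k j^{-\alpha}}\;+\;O\bigl(L_g^2 C\,(L_{\nabla h}+C^{-1}\|A\|^2)\bigr),
\end{equation*}
i.e.\ a constant-order term that does not decay. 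No rebalancing against $L_k\sim k^{\alpha}$ can remove it: the numerator and denominator both grow like $k^{1-\alpha}$, so their ratio is bounded away from zero. The "main obstacle" you flag at the end is not a bookkeeping issue but a fatal loss in this estimate.

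The fix, and what the paper actually does, is to bound the \emph{difference of two Moreau envelopes at nearby parameters} rather than the gap to the unsmoothed function: by \cite[Lemma~4.1]{bohm2021variable}, $g_{\mu_{k+1}}(y)-g_{\mu_k}(y)\le\tfrac12(\mu_k-\mu_{k+1})\tfrac{\mu_k}{\mu_{k+1}}L_g^2\le(\mu_k-\mu_{k+1})L_g^2$, the last step using $\mu_k\le 2^{\alpha}\mu_{k+1}\le2\mu_{k+1}$. Since $\mu_k-\mu_{k+1}=C(k^{-\alpha}-(k+1)^{-\alpha})=O(k^{-\alpha-1})$, this error telescopes to at most $\mu_1 L_g^2=CL_g^2$, which is precisely the additive constant $CL_g^2$ appearing inside the square root in $\widetilde C$. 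With that replacement the rest of your argument goes through and coincides with the paper's proof.
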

\begin{proof}
By the descent lemma (see, e.g., \cite[Lemma~5.7]{MR3719240}),  for all $k\geq 1$
\begin{align*}
F_{k}(x_{k+1})\leq F_{k}(x_{k})+\langle \nabla F_{k}(x_{k}),x_{k+1}-x_{k}\rangle+\dfrac{1}{2\gamma_{k}}\|x_{k+1}-x_{k}\|^{2}.
\end{align*}
Using that $P_{V}$ is linear, $x_{k}\in V$, and  that $\langle x,P_{V}y\rangle=\langle P_{V}x,P_{V}y\rangle$, we have
$$
\langle \nabla F_{k}(x_{k}),x_{k+1}-x_{k}\rangle=\langle\nabla F_{k}(x_{k}),-P_{V}\gamma_{k}\nabla F_{k}(x_{k})\rangle
=-\gamma_{k}\|P_{V}\nabla F_{k}(x_{k})\|^{2}.
$$
 Thus, since $\|x_{k+1}-x_{k}\|^{2}=\gamma_{k}^{2}\|P_{V}\nabla F_{k}(x_{k})\|^{2}$, we get that
\begin{align}
\label{des_Fk}
F_{k}(x_{k+1})\leq F_{k}(x_{k})-\dfrac{\gamma_{k}}{2}\|P_{V}\nabla F_{k}(x_{k})\|^{2}.
\end{align}
By \cite[Lemma~4.1]{bohm2021variable}, we have that
$$  \hspace{-0.01mm}
F_{k+1}(x_{k+1})\leq F_{k}(x_{k+1})+\dfrac{1}{2}(\mu_{k}-\mu_{k+1})\dfrac{\mu_{k}}{\mu_{k+1}}L_{g}^{2}\leq F_{k}(x_{k+1})+(\mu_{k}-\mu_{k+1})L_{g}^{2},
$$
where we have used that $\mu_{k}\leq 2^{\alpha}\mu_{k+1}\leq 2\mu_{k+1}$. Hence, we obtain that
\begin{equation*}
F_{k+1}(x_{k+1})\leq F_{k}(x_{k})-\dfrac{\gamma_{k}}{2}\|P_{V}\nabla F_{k}(x_{k})\|^{2}+(\mu_{k}-\mu_{k+1})L_{g}^{2}.
\end{equation*}
By summing these inequalities from $k=1$ to $K$, we obtain
\begin{equation}\label{des_pv_grad}
\begin{aligned}
\sum_{k=1}^{K}\dfrac{\gamma_{k}}{2}\|P_{V}\nabla F_{k}(x_{k})\|^{2}&\leq F_{1}(x_{1})-F_{K+1}(x_{K+1})+(\mu_{1}-\mu_{K+1})L_{g}^{2}\\
&\leq F_{1}(x_{1})-F^{*}+\mu_{1}L_{g}^{2}.
\end{aligned}
\end{equation}
On the one hand, we observe that 
\begin{align}
\label{des_gammak}
\gamma_{k}=\frac{\mu_{k}}{\mu_{k}L_{\nabla h}+\|A\|^{2}}\geq k^{-\alpha}\frac{C}{C L_{\nabla h}+\|A\|^{2}}=k^{-\alpha}\dfrac{1}{L_{\nabla h}+C^{-1}\|A\|^{2}}.
\end{align}
On the other hand,  
\begin{align*}
\sum_{k=1}^{K}k^{-\alpha}\geq \int_{1}^{K+1}x^{-\alpha}dx\geq (K+1)^{1-\alpha}-1\geq (2^{1-\alpha}-1)K^{1-\alpha},
\end{align*}
where we have used Lemma \ref{claim_des}. Hence,
\begin{align}
\label{des_min_proj_grad}
\min_{1\leq j\leq K}\|P_{V}\nabla F_{j}(x_{j})\|^{2}\leq \dfrac{2(L_{\nabla h}+C^{-1}\|A\|^{2})}{(2^{1-\alpha}-1)K^{1-\alpha}}\left(F_{1}(x_{1})-F^{*}+C L_{g}^{2}\right).
\end{align}
Now, $N_{V}(x)=V^{\perp}$ for all $x\in V$. Then, for all $z\in\H$ and $x\in V$, we have that
\begin{align}
\label{des_cono_proj}
d(-z,N_{V}(x))=d(-z,V^{\perp})\leq \|-z-(-z+P_{V}z)\|=\|P_{V}z\|.
\end{align}
By \cite[Lemma~3.2]{bohm2021variable}, we have that 
$$\nabla(g_{\mu_{j}}\circ A)(x_{j})=A^{*}\nabla g_{\mu_{j}}(Ax_{j})\in A^{*}\partial g(\prox_{\mu_{j} g}(Ax_{j})).$$
 Thus,  taking $z=\nabla F_{j}(x_{j})=\nabla h(x_{j})+\nabla (g_{\mu_{j}}\circ A)(x_{j})$ in \eqref{des_cono_proj}, we obtain that
\begin{align}
\label{des_key_1} 
d(-\nabla h(x_{j}),A^{*}\partial g(\prox_{\mu_{j} g}(Ax_{j}))+N_{V}(x_{j}))\leq \|P_{V}\nabla F_{j}(x_{j})\|,
\end{align}
which, by virtue of  \eqref{des_min_proj_grad},  implies the first inequality. The second inequality follows from \cite[Lemma~3.3]{bohm2021variable}. \qed
\end{proof}
\begin{remark}
The proposed algorithm can be easily adapted to the case of a closed affine subspace. Indeed, if $V$ is a closed affine subspace, that is, $V=W+z_{0}$, where $W$ is a closed vector subspace and $z_{0}\in\H$, then problem~\eqref{prob_w_conv} is equivalent to $\min_{x\in W}h(x+z_{0})+g(y+Az_{0})$. The assumptions on the data hold, and $\prox_{\mu g(\cdot+ Az_0)}(y)=\prox_{\mu g}(y+Az_{0})-Az_{0}$. 
\end{remark}
The hypothesis that the function $g$ is Lipschitz in Theorem~\ref{teo_conv_weak_sev} may be too restrictive. For example, if $g$ is of the form $g(x)=-\|x\|^{2}$, then such a hypothesis does not hold. In what follows, we prove that if the sequence generated by the Algorithm~\ref{alg_w_convex_sev} is bounded and $\G=\R^{m}$, then a similar result to the Theorem~\ref{teo_conv_weak_sev} holds without requiring $g$ to be Lipschitz. 
\begin{theorem}
In the context of problem \eqref{prob_w_conv}, assume that $\G=\R^{m}$, and the sequence $(x_{k})$ generated by the Algorithm~\ref{alg_w_convex_sev} is bounded. Then, there exist $\ell\geq 0$ and $N\geq 1$ such that 
\begin{equation*}
\begin{aligned}
\min_{N\leq j\leq k}d(-\nabla h(x_{j}), A^{*}\partial g(\prox_{\mu_{j}g}(Ax_{j}))+N_{V}(x_{j}))&\leq k^{\frac{\alpha-1}{2}}\widetilde{C} \textrm{ for all } k\geq N,\\
\Vert Ax_{k}-\prox_{\mu_{k}g}(Ax_{k})\Vert &\leq k^{-\alpha}C \ell  \textrm{ for all } k\geq N,
\end{aligned}
\end{equation*}
where $\widetilde{C}=\frac{\sqrt{2}\sqrt{L_{\nabla h}+C^{-1}\|A\|^{2}}}{\sqrt{\left(1+1/N\right)^{1-\alpha}-1}}\sqrt{F_{N}(x_{N})-F^{*}+\mu_{N} \ell^{2}}$. 
\end{theorem}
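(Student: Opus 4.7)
The plan is to replicate the proof of Theorem~\ref{teo_conv_weak_sev}, shifting the starting index of the telescoping sum from $1$ to a suitable $N$ and replacing the global Lipschitz constant $L_{g}$ by a uniform local bound $\ell$ on the Moreau-envelope gradients along the trajectory. The finite-dimensionality of $\G=\R^{m}$ and the boundedness of $(x_{k})$ will enter only in producing such $N$ and $\ell$.

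To construct $\ell$ and $N$, first note that $(Ax_{k})$ is bounded since $A$ is bounded linear. Because $g+\tfrac{\rho}{2}\|\cdot\|^{2}\in\Gamma_{0}(\R^{m})$ is proper, it admits a global affine minorant, which yields a lower bound $g(y)\geq -\tfrac{\rho}{2}\|y\|^{2}-a\|y\|-b$ for suitable $a,b\geq 0$. Combining the Moreau inequality $g(\prox_{\mu_{k}g}(y))+\tfrac{1}{2\mu_{k}}\|y-\prox_{\mu_{k}g}(y)\|^{2}\leq g(y)$ with this quadratic minorant, the triangle-type bound $\|\prox_{\mu_{k}g}(y)\|\leq\|y\|+\|y-\prox_{\mu_{k}g}(y)\|$, and $2\rho\mu_{k}\leq 1$, one checks that for every bounded set $B\subset\R^{m}$ there is an index $N_{B}$ such that $\{\prox_{\mu_{k}g}(y):y\in B,\;k\geq N_{B}\}$ is bounded. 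Taking $B$ to be a ball containing $(Ax_{k})_{k}$ and setting $N:=N_{B}$, and exploiting that the finite convex function $g+\tfrac{\rho}{2}\|\cdot\|^{2}$ on $\R^{m}$ is locally Lipschitz with locally bounded subgradient, we obtain $\ell\geq 0$ with $\|\xi\|\leq\ell$ for every $\xi\in\partial g(y)$ and every $y$ in a bounded set containing $\{\prox_{\mu_{k}g}(Ax_{k}),\,\prox_{\mu_{k}g}(Ax_{k+1}):k\geq N\}\cup\{Ax_{k}:k\geq N\}$.

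Since $\nabla g_{\mu_{k}}(Ax_{k})\in\partial g(\prox_{\mu_{k}g}(Ax_{k}))$ by \cite[Lemma~3.2]{bohm2021variable}, we have $\|Ax_{k}-\prox_{\mu_{k}g}(Ax_{k})\|=\mu_{k}\|\nabla g_{\mu_{k}}(Ax_{k})\|\leq\mu_{k}\ell$ for $k\geq N$, which together with $\mu_{k}=Ck^{-\alpha}$ gives the second claimed inequality. For the first, the descent lemma gives \eqref{des_Fk} verbatim, while the analog of \cite[Lemma~4.1]{bohm2021variable} follows from the elementary inequality $g_{\mu_{k+1}}(y)-g_{\mu_{k}}(y)\leq\tfrac{\mu_{k}-\mu_{k+1}}{2\mu_{k}\mu_{k+1}}\|y-\prox_{\mu_{k}g}(y)\|^{2}$ evaluated at $y=Ax_{k+1}$, using the bound $\|Ax_{k+1}-\prox_{\mu_{k}g}(Ax_{k+1})\|\leq\mu_{k}\ell$. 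Telescoping from $k=N$ to $K$ yields $\sum_{k=N}^{K}\tfrac{\gamma_{k}}{2}\|P_{V}\nabla F_{k}(x_{k})\|^{2}\leq F_{N}(x_{N})-F^{*}+\mu_{N}\ell^{2}$, lower-bounding $\sum_{k=N}^{K}k^{-\alpha}\geq\int_{N}^{K+1}x^{-\alpha}\,dx$ produces the factor $(1+1/N)^{1-\alpha}-1$ appearing in $\widetilde{C}$, and the normal-cone estimates \eqref{des_cono_proj}--\eqref{des_key_1} transfer unchanged.

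The hard part will be the uniform boundedness of the proximity points for $k\geq N$, which is the only step where $\G=\R^{m}$ and $\mu_{k}\downarrow 0$ are essentially used. The cross term $\|\prox_{\mu_{k}g}(y)\|\,\|y-\prox_{\mu_{k}g}(y)\|$ arising in that estimate must be absorbed via Young's inequality together with the hypothesis $2\rho\mu_{k}\leq 1$; once this boundedness is secured, the remaining calculations amount to a routine transcription of the previous argument with $N,\ell$ replacing $1,L_{g}$.
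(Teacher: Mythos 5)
Your proposal is correct and follows essentially the same route as the paper: the paper also telescopes from a suitable index $N$ with the global Lipschitz constant $L_{g}$ replaced by a uniform bound $\ell$ on $\|\nabla g_{\mu}\|$ along the (bounded) trajectory, the construction of $\ell$, $\mu_{0}$, and hence $N$ being exactly the affine-minorant/Moreau-inequality/local-Lipschitz argument you sketch, which the paper isolates as Propositions~\ref{prop_1.1} and \ref{prop_prox_w_convex} in the appendix. The remaining steps (descent lemma, the $\mu_{k}\leq 2\mu_{k+1}$ estimate, the integral lower bound giving $(1+1/N)^{1-\alpha}-1$, and the normal-cone inequality) match the paper's proof verbatim.
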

\begin{proof}
Since $(x_{j})$ is bounded, there exists $m\geq 0$ such that $\|x_{j}\|\leq m$ for all $j\geq 1$. Define  $S= \|A\|m \mathbb{B}$. Then, by Proposition~\ref{prop_prox_w_convex}, there exist $\ell\geq 0$ and $\mu_{0}\in\left]0,1\right[$ such that for all $\mu\in\left]0,\mu_{0}\right[$ and $y\in S$,
\begin{align}
\label{grad_mor_env}
\|\nabla g_{\mu}(y)\|=\dfrac{1}{\mu}\|y-\prox_{\mu g}(y)\|\leq \ell.
\end{align}
Let $N\geq 1$ such that  $\mu_{k}<\mu_{0}$ and $\left(1+1/N\right)^{1-\alpha}-1<1/2$ for all $k\geq N$. By \cite[Lemma~4.1]{bohm2021variable}, the fact that $\{Ax_{k}\}\subset S$, and \eqref{grad_mor_env}, we have that for all $k\geq N$
\begin{align*}
F_{k+1}(x_{k+1})&\leq F_{k}(x_{k+1})+\dfrac{1}{2}(\mu_{k}-\mu_{k+1})\dfrac{\mu_{k}}{\mu_{k+1}}\|\nabla g_{\mu_{k}}(Ax_{k+1})\|^{2}\\
&\leq F_{k}(x_{k+1})+\dfrac{1}{2}(\mu_{k}-\mu_{k+1})\dfrac{\mu_{k}}{\mu_{k+1}}\ell^{2}\\
&\leq F_{k}(x_{k+1})+(\mu_{k}-\mu_{k+1})\ell^{2},
\end{align*}
where we have used that $\mu_{k}\leq 2^{\alpha}\mu_{k+1}\leq 2\mu_{k+1}$. Thus, from \eqref{des_Fk}, for all $k\geq N$,
\begin{align*}
F_{k+1}(x_{k+1})\leq F_{k}(x_{k})-\dfrac{\gamma_{k}}{2}\|P_{V}\nabla F_{k}(x_{k})\|^{2}+(\mu_{k}-\mu_{k+1})\ell^{2}.
\end{align*}
By summing both sides of the last inequality from $k=N$ to $k=K$, we obtain
\begin{equation}\label{sum_gamma_k}
\begin{aligned}
\sum_{k=N}^{K}\dfrac{\gamma_{k}}{2}\|P_{V}\nabla F_{k}(x_{k})\|^{2}&\leq F_{N}(x_{N})-F_{K+1}(x_{K+1})+(\mu_{N}-\mu_{K+1})\ell^{2}\\
&\leq F_{N}(x_{N})-F^{*}+\mu_{N}\ell^{2}.
\end{aligned}
\end{equation}
Now, note that
\begin{align}
\label{sum_k_alpha}
\sum_{k=N}^{K}k^{-\alpha}\geq \int_{N}^{K+1}x^{-\alpha}dx \geq (K+1)^{1-\alpha}-N^{1-\alpha}.
\end{align}
Setting $\theta:= \left(1+1/N\right)^{1-\alpha}-1$, from Lemma \ref{claim_N}, \eqref{des_gammak}, \eqref{sum_gamma_k}, and \eqref{sum_k_alpha}, we get 
\begin{align*}
\min_{N\leq j\leq K}\|P_{V}\nabla F_{j}(x_{j})\|^{2}\leq \frac{1}{\theta K^{1-\alpha}}(2(L_{\nabla h}+C^{-1}\|A\|^{2})(F_{N}(x_{N})-F^{*}+\mu_{N}\ell^{2})),
\end{align*}
which implies the first inequality. The second inequality follows from \eqref{grad_mor_env}. \qed
\end{proof}
\section{Variable smoothing with epochs}\label{sec_epoch}
We observe that in Theorem~\ref{teo_conv_weak_sev}, the first bound states that during the first $k=O(\epsilon^{-\frac{2}{1-\alpha}})$ iterations, there exists an iteration $j$ at which the optimality condition holds within a tolerance of $\epsilon$. Nevertheless, this bound could have been satisfied at an earlier
iteration, that is, for some $j\ll \epsilon^{-\frac{2}{1-\alpha}}$. Consequently, the value of the second bound on $\|Ax_{j}-\prox_{\mu_{j}g}(Ax_{j})\|$ may not be particularly small. This drawback is addressed by the following variant of Algorithm~\ref{alg_w_convex_sev},inspired by \cite{bohm2021variable}, where the steps are organized into a series of epochs, each of which is twice as long as the previous one. 
\begin{algorithm}[H]
\caption{Projected variable smoothing with epochs}\label{alg_epoch}
\begin{algorithmic} 
\Require $x_{1}\in V$, $\alpha\in\left]0,1\right[$, $C>0$ such that $2\rho C\leq 1$, and tolerance $\epsilon>0$.
\For{$l=0,1,\ldots$}
\State Set $S_{l} \gets \infty$ and $j_{l} \gets 2^{l}$
\For{$k=2^{l},2^{l}+1,\ldots,2^{l+1}-1$}
\State Set $\mu_{k}\gets Ck^{-\alpha}$, set $L_{k}$ as in \eqref{cons_L_k}, set $\gamma_{k}\gets 1/L_{k}$.
\State $x_{k+1}\gets P_{V}(x_{k}-\gamma_{k}\nabla F_{k}(x_{k}))$
\If{$\|\nabla F_{k+1}(x_{k+1})\|\leq S_{l}$}
\State $S_{l}\gets \|\nabla F_{k+1}(x_{k+1})\|$ and $j_{l}\gets k+1$
\If{$S_{l}\leq \epsilon$ and $\|Ax_{k+1}-\prox_{\mu_{k+1}g}(Ax_{k+1})\|\leq \epsilon^{\frac{2\alpha}{1-\alpha}}$}
\State STOP
\EndIf
\EndIf
\EndFor
\EndFor
\end{algorithmic}
\end{algorithm}
\noindent The next result recovers \cite[Theorem~4.2]{bohm2021variable} for $\alpha=1/3$, $C=(2\rho)^{-1}$, and $V=\H$. 
\begin{theorem}
In the context of problem~\eqref{prob_w_conv}, assume that $g$ is $L_{g}$-Lipschitz. Then, given a tolerance $\epsilon>0$, Algorithm~\ref{alg_epoch} generates an iterate $x_{j}$ for some $j=O(\epsilon^{-\frac{2}{1-\alpha}})$ such that $\|Ax_{j}-\prox_{\mu_{j}g}(Ax_{j})\|\leq\epsilon^{\frac{2\alpha}{1-\alpha}}$ and 
\begin{equation*} 
d(-\nabla h(x_{j}),A^{*}\partial g(\prox_{\mu_{j}g}(Ax_{j}))+N_{V}(x_{j}))\leq\epsilon.
\end{equation*}
\end{theorem}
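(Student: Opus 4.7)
The plan is to transplant the telescoping argument of Theorem~\ref{teo_conv_weak_sev} into each single epoch of Algorithm~\ref{alg_epoch}. Fix the $l$-th epoch, whose indices run over $k=2^l,\ldots,2^{l+1}-1$. Repeating the computation that led to \eqref{des_pv_grad}, but restricted to the epoch, yields
\begin{align*}
\sum_{k=2^l}^{2^{l+1}-1} \frac{\gamma_k}{2}\|P_V \nabla F_k(x_k)\|^2 \leq F_{2^l}(x_{2^l})-F^{*}+\mu_{2^l} L_g^2.
\end{align*}
To ensure the right-hand side is uniformly bounded in $l$, I would observe that the per-step descent already derived in Theorem~\ref{teo_conv_weak_sev} (with the gradient term dropped) yields $F_{k+1}(x_{k+1})+\mu_{k+1}L_g^2\leq F_k(x_k)+\mu_k L_g^2$, so the potential $V_k:=F_k(x_k)+\mu_k L_g^2$ is nonincreasing along the whole sequence, and thus $F_{2^l}(x_{2^l})-F^{*}+\mu_{2^l}L_g^2\leq V_1-F^{*}=:D$ for every $l$.

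Next, I would combine the lower bound $\gamma_k\geq k^{-\alpha}/(L_{\nabla h}+C^{-1}\|A\|^2)$ from \eqref{des_gammak} with the trivial estimate $k\leq 2^{l+1}$ inside the epoch to obtain $\sum_{k=2^l}^{2^{l+1}-1}\gamma_k\geq \tilde{c}\,2^{l(1-\alpha)}$, which gives
\begin{align*}
\min_{2^l\leq k\leq 2^{l+1}-1}\|P_V \nabla F_k(x_k)\|^2 \leq \frac{2D}{\tilde{c}\,2^{l(1-\alpha)}}.
\end{align*}
Passing through \eqref{des_cono_proj}–\eqref{des_key_1} as in Theorem~\ref{teo_conv_weak_sev} turns this into an $O(2^{-l(1-\alpha)/2})$ bound on the desired distance residual for some $j$ in the epoch. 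Together with the prox-error estimate $\|Ax_k-\prox_{\mu_k g}(Ax_k)\|\leq \mu_k L_g\leq CL_g\,2^{-l\alpha}$ from \cite[Lemma~3.3]{bohm2021variable}, the two requirements
\begin{align*}
\bigl(2D/(\tilde{c}\,2^{l(1-\alpha)})\bigr)^{1/2}\leq \epsilon, \qquad CL_g\,2^{-l\alpha}\leq \epsilon^{2\alpha/(1-\alpha)}
\end{align*}
both collapse to the single condition $2^l=\Omega(\epsilon^{-2/(1-\alpha)})$.

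Choosing the smallest such epoch index $l^{*}=\lceil\log_2(c_0\,\epsilon^{-2/(1-\alpha)})\rceil$ guarantees an iterate $x_j$ inside epoch $l^{*}$ meeting both requirements, and the total iteration count through the end of that epoch is $\sum_{i=0}^{l^{*}}2^i=2^{l^{*}+1}-1=O(\epsilon^{-2/(1-\alpha)})$. I expect the main obstacle to be bookkeeping rather than a new estimate: the whole argument relies on $D$ being $l$-independent, which is precisely why the potential monotonicity established in the first paragraph, and not a merely epoch-local telescoping, is essential. A minor point worth flagging is that Algorithm~\ref{alg_epoch}'s in-loop test compares $\|\nabla F_{k+1}(x_{k+1})\|$ against $\epsilon$ while the analysis controls only the projected gradient $\|P_V\nabla F_{k+1}(x_{k+1})\|\leq\|\nabla F_{k+1}(x_{k+1})\|$; since the theorem asserts only the \emph{existence} of a good iterate generated along the trajectory, the conclusion is read off directly from the analysis, regardless of when the explicit stopping test first fires.
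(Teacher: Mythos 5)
Your proposal is correct and follows essentially the same route as the paper's proof: bound the per-epoch sum $\sum_{k=2^l}^{2^{l+1}-1}\frac{\gamma_k}{2}\|P_V\nabla F_k(x_k)\|^2$ by the uniform constant $F_1(x_1)-F^*+CL_g^2$ (your potential-monotonicity argument for $V_k$ is just a repackaging of the paper's direct observation that this epoch sum is a partial sum of \eqref{des_pv_grad}), lower-bound $\sum_{k}k^{-\alpha}$ over the epoch by a constant times $2^{l(1-\alpha)}$, pass through \eqref{des_key_1} and the prox-error bound $\mu_jL_g\leq CL_g2^{-l\alpha}$, and take the first epoch where both thresholds hold, giving $O(\epsilon^{-2/(1-\alpha)})$ total iterations. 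Your closing remark about the in-loop test monitoring $\|\nabla F_{k+1}(x_{k+1})\|$ while the analysis controls only $\|P_V\nabla F_{k+1}(x_{k+1})\|$ is a fair observation (the paper's phrase ``must stop before the end of epoch $l$'' glosses over this when $V\neq\H$), and your resolution—that the theorem only asserts existence of a suitable iterate along the trajectory—is the right reading.
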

\begin{proof} By \eqref{des_pv_grad}, we deduce that
\begin{align*}
\sum_{k=2^{l}}^{2^{l+1}-1}\frac{\gamma_{k}}{2}\|P_{V}\nabla F_{k}(x_{k})\|^{2}\leq F_{1}(x_{1})-F^{*}+C L_{g}^{2}.
\end{align*}
Now, note that
\begin{align*}
\sum_{k=2^{l}}^{2^{l+1}-1}k^{-\alpha}\geq \int_{2^{l}}^{2^{l+1}}x^{-\alpha}dx=\frac{1}{1-\alpha}(2^{1-\alpha}-1)(2^{l})^{1-\alpha}.
\end{align*}
Then, from \eqref{des_gammak}, we obtain that
\begin{align*}
\min_{2^{l}\leq j\leq 2^{l+1}-1}\|P_{V}\nabla F_{j}(x_{j})\|\leq \widetilde{C}(2^{l})^{\frac{\alpha-1}{2}},
\end{align*}
where $\widetilde{C}=\sqrt{\frac{2(1-\alpha)}{2^{1-\alpha}-1}}\sqrt{L_{\nabla h}+C^{-1}\|A\|^{2}}\sqrt{F_{1}(x_{1})-F^{*}+CL_{g}^{2}}$. Thus, by \eqref{des_key_1}, we conclude that
\begin{align}
\label{des_crit_1}
\min_{2^{l}\leq j\leq 2^{l+1}-1}d(-\nabla h(x_{j}),A^{*}\partial g(z_{j})+N_{V}(x_{j}))\leq \widetilde{C}(2^{l})^{\frac{\alpha-1}{2}},
\end{align}
where $z_{j}=\prox_{\mu_{j}g}(Ax_{j})$. Moreover, for $2^{l}\leq j\leq 2^{l+1}-1$, we have from \cite[Lemma~3.3]{bohm2021variable} that
\begin{align}
\label{des_fact_1}
\|Ax_{j}-z_{j}\|\leq \mu_{j}L_{g}=Cj^{-\alpha}L_{g}\leq C(2^{l})^{-\alpha}L_{g}.
\end{align}
From \eqref{des_crit_1} and \eqref{des_fact_1}, it follows that Algorithm~\ref{alg_epoch} must stop before the end of epoch $l$, that is, before $2^{l+1}$ iterations have been completed, where $l$ is the first nonnegative number such that $\widetilde{C}(2^{l})^{\frac{\alpha-1}{2}}\leq \epsilon$ and $C(2^{l})^{-\alpha}L_{g}\leq \epsilon^{\frac{2\alpha}{1-\alpha}}$, which are equivalent to $\widetilde{C}^{\frac{2}{1-\alpha}}\epsilon^{-\frac{2}{1-\alpha}}\leq 2^{l}$ and $C^{1/\alpha}L_{g}^{1/\alpha}\epsilon^{-\frac{2}{1-\alpha}}\leq 2^{l}$. Hence, the algorithm terminates after at most $2\max\{\widetilde{C}^{\frac{2}{1-\alpha}},C^{1/\alpha}L_{g}^{1/\alpha}\}\epsilon^{-\frac{2}{1-\alpha}}$ iterations. \qed
\end{proof}

\section{Supremum of weakly convex functions}
\label{sec_mor_sup}
Supremum functions play a prominent role in optimization. They models several phenomena, ranging from stochastic optimization to equilibrium problems. However, they are usually nonsmooth; hence, their optimization presents significant challenges from both a theoretical and numerical perspective.  In this section, we extend the analysis from \cite{MR4279933}. Hence, we study the Moreau envelope and the proximity operator of the supremum function: 
	\begin{align}
		\label{fun_sup}
		f(x) = \sup_{c\in \C} f_{c}(x) \quad \text{for all }x\in \H,
	\end{align}
	where $\H$ is a real Hilbert space, $\C\neq\emptyset$ is a convex subset of some topological vector space and $f_{c}\in\Gamma_{\rho_{c}}(\H)$ with $\rho_{c}\geq 0$ for all $c\in\C$.
	\begin{lemma}
		\label{sup_weakconv}
		Let $(f_{c})_{c\in\C}$ be a family of functions such that $f_{c}\in\Gamma_{\rho_{c}}(\H)$ for all $c\in\C$. Assume that $f:=\sup_{c\in\C} f_{c}$ is proper and that  $\bar{\rho}:=\sup_{c\in\C}\rho_{c}<+\infty$. Then, $f\in\Gamma_{\bar{\rho}}(\H)$. More precisely,
		\begin{align}
			\label{def_h_rc}
			h:=f+\frac{\bar{\rho}}{2}\|\cdot\|^{2}\in\Gamma_{0}(\H)\textrm{ and } r_{c}:=f_{c}+\frac{\bar{\rho}}{2}\|\cdot\|^{2}\in\Gamma_{0}(\H)\textrm{ for all }c\in\C.
		\end{align}
	\end{lemma}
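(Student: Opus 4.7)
The plan is to first establish the second assertion in \eqref{def_h_rc} for each $r_c$ individually, then to rewrite $h$ as the supremum of the family $(r_c)_{c\in\C}$, and finally to read off $f \in \Gamma_{\bar{\rho}}(\H)$ directly from the definition.

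For the first step, fix $c\in\C$ and write
\[
r_c = \Bigl(f_c+\tfrac{\rho_c}{2}\|\cdot\|^2\Bigr) + \tfrac{\bar{\rho}-\rho_c}{2}\|\cdot\|^2.
\]
By hypothesis the first summand lies in $\Gamma_0(\H)$, while the second is a nonnegative scalar multiple of $\|\cdot\|^2$, which is convex, continuous, and real-valued. Hence $r_c$ is convex and lower semicontinuous, and it is proper because adding a finite-valued continuous function to a proper function preserves properness.

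For the second step, I would observe that since $\frac{\bar{\rho}}{2}\|\cdot\|^2$ is finite-valued, one can distribute the supremum:
\[
h(x) = \sup_{c\in\C} f_c(x) + \tfrac{\bar{\rho}}{2}\|x\|^2 = \sup_{c\in\C}\Bigl(f_c(x)+\tfrac{\bar{\rho}}{2}\|x\|^2\Bigr) = \sup_{c\in\C} r_c(x).
\]
A pointwise supremum of convex lower semicontinuous functions is convex and lower semicontinuous, so $h$ inherits these two properties from the $r_c$. Properness of $h$ follows from the assumption that $f$ is proper: since $f$ never takes the value $-\infty$, neither does $h = f+\tfrac{\bar{\rho}}{2}\|\cdot\|^2$, and since $f\not\equiv+\infty$ there exists $x_0$ with $f(x_0)<+\infty$, whence $h(x_0)<+\infty$. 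Thus $h\in\Gamma_0(\H)$.

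Finally, because $f = h - \tfrac{\bar{\rho}}{2}\|\cdot\|^2$ with $h\in\Gamma_0(\H)$, the definition of $\Gamma_{\bar{\rho}}(\H)$ recalled in Section~\ref{sec_pre} gives $f\in\Gamma_{\bar{\rho}}(\H)$, completing the proof. There is no real obstacle here; the only subtlety worth noting is the routine verification that the supremum commutes with the addition of the finite-valued term $\tfrac{\bar{\rho}}{2}\|\cdot\|^2$ and the careful check of properness, both of which rely only on $\bar{\rho}<+\infty$ and on $f$ being proper by assumption.
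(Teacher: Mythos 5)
Your proof is correct and follows essentially the same route as the paper: decompose $r_c$ as $\bigl(f_c+\tfrac{\rho_c}{2}\|\cdot\|^2\bigr)+\tfrac{\bar{\rho}-\rho_c}{2}\|\cdot\|^2$, pass the supremum through the finite quadratic term to get $h=\sup_{c\in\C}r_c$, and invoke properness of $f$ to conclude $h\in\Gamma_0(\H)$. No gaps; the extra care you take with properness is a minor elaboration of what the paper leaves implicit.
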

	\begin{proof}
		It is clear that $h_{c}:=f_{c}+\dfrac{\rho_{c}}{2}\|\cdot\|^{2}\in\Gamma_{0}(\H)$ for all $c\in\C$.
Moreover, 
		\begin{align}
			\label{def_h}
			h:=f+\frac{\bar{\rho}}{2}\|\cdot\|^{2} = \sup_{c\in\C} f_{c}+\frac{\bar{\rho}}{2}\|\cdot\|^{2}=\sup_{c\in\C} (h_{c}+\frac{(\bar{\rho}-\rho_{c})}{2}\|\cdot\|^{2}).
		\end{align}
		Since $h_{c}$ belong to $\Gamma_0(\H)$, and $\bar{\rho}-\rho_{c}\geq 0$ for all $c\in\C$, we obtain that $r_{c}:=h_{c}+\frac{(\bar{\rho}-\rho_{c})}{2}\|\cdot\|^{2} = f_{c}+\frac{\bar{\rho}}{2}\|\cdot\|^{2}$ belongs to $\Gamma_0(\H)$  for all $c\in\C$. Since $\sup_{c\in\C}f_{c}$ is proper, we get that $h=\sup_{c\in\C} r_{c}\in\Gamma_{0}(\H)$. Hence, by \eqref{def_h}, $f\in\Gamma_{\bar{\rho}}(\H)$. \qed
	\end{proof}
	The next result provides a formula for the Moreau envelope of the supremum function in \eqref{fun_sup} as the supremum of the Moreau envelope of the data functions $f_{c}$. 	Whenever the data functions are convex, it recovers \cite[Theorem~3.1]{MR4279933}.  
	\begin{theorem}
		\label{teo_sup}
		Let $\C\neq\emptyset$ be a convex set and $(f_{c})_{c\in\C}$ a family of functions such that $f_{c}\in\Gamma_{\rho_{c}}(\H)$ for all $c\in\C$. Assume that $f=\sup_{c\in\C} f_{c}$ is proper,  $\bar{\rho}:=\sup_{c\in\C}\rho_{c}<+\infty$, and that $c\mapsto f_{c}(x)$ is concave for all $x\in \H$. Then, for all $\mu\in\left]0,1/\bar{\rho}\right[$, we have
		\begin{align}
			\label{result_teo_sup}
f_{\mu}(x)=\sup_{c\in\C}(f_{c})_{\mu}(x)\quad\text{for all } x\in \H.
		\end{align}
		Moreover, if $\C$ is compact and the function $c\mapsto f_{c}(x)$ is upper semicontinuous for all $x\in\H$, then the supremum in \eqref{result_teo_sup} is attained.
	\end{theorem}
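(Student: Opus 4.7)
My plan is to reduce the weakly convex case to the convex case treated in \cite[Theorem~3.1]{MR4279933}, using the decomposition provided by Lemma~\ref{sup_weakconv}. Set $\lambda:=\mu/(1-\mu\bar\rho)$; the condition $\mu\in\left]0,1/\bar\rho\right[$ guarantees $\lambda>0$. Introduce the convex regularizations
\[
r_c:=f_c+\tfrac{\bar\rho}{2}\|\cdot\|^2\in\Gamma_0(\H),\qquad h:=f+\tfrac{\bar\rho}{2}\|\cdot\|^2=\sup_{c\in\C}r_c\in\Gamma_0(\H),
\]
as in \eqref{def_h_rc}. Since adding the $c$-independent quantity $\tfrac{\bar\rho}{2}\|x\|^2$ preserves concavity (and, later, upper semicontinuity) in $c$, the family $(r_c)_{c\in\C}$ still satisfies the hypotheses of \cite[Theorem~3.1]{MR4279933}, which therefore yields
\[
h_\lambda(z)=\sup_{c\in\C}(r_c)_\lambda(z)\quad\text{for all }z\in\H,
\]
with the supremum attained when $\C$ is compact and each $c\mapsto r_c(z)$ is upper semicontinuous.

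The main step is then an algebraic identity: I complete the square in $y$ in the expression $f(y)+\tfrac{1}{2\mu}\|x-y\|^2 = h(y) - \tfrac{\bar\rho}{2}\|y\|^2 + \tfrac{1}{2\mu}\|x-y\|^2$. Grouping the quadratic-in-$y$ terms produces coefficient $\tfrac{1}{2}(\tfrac{1}{\mu}-\bar\rho)=\tfrac{1}{2\lambda}$, and the resulting quadratic is centered at $\tfrac{\lambda}{\mu}x$, yielding
\[
f(y)+\tfrac{1}{2\mu}\|x-y\|^2=h(y)+\tfrac{1}{2\lambda}\bigl\|y-\tfrac{\lambda}{\mu}x\bigr\|^2-\tfrac{\bar\rho}{2(1-\mu\bar\rho)}\|x\|^2.
\]
The same identity holds verbatim with $f$ and $h$ replaced by $f_c$ and $r_c$. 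Taking the infimum in $y$ on both sides gives the transfer formulas
\[
f_\mu(x)=h_\lambda\!\Bigl(\tfrac{\lambda}{\mu}x\Bigr)-\tfrac{\bar\rho}{2(1-\mu\bar\rho)}\|x\|^2,\qquad (f_c)_\mu(x)=(r_c)_\lambda\!\Bigl(\tfrac{\lambda}{\mu}x\Bigr)-\tfrac{\bar\rho}{2(1-\mu\bar\rho)}\|x\|^2.
\]
Substituting the convex identity $h_\lambda(\tfrac{\lambda}{\mu}x)=\sup_{c\in\C}(r_c)_\lambda(\tfrac{\lambda}{\mu}x)$ and subtracting the common constant from each term of the supremum immediately yields \eqref{result_teo_sup}. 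For the attainment assertion, upper semicontinuity of $c\mapsto f_c(x)$ transfers to $c\mapsto r_c(\tfrac{\lambda}{\mu}x)$, and the corresponding statement in \cite[Theorem~3.1]{MR4279933} gives a maximizer for $(r_c)_\lambda(\tfrac{\lambda}{\mu}x)$, which via the transfer formula is also a maximizer for $(f_c)_\mu(x)$.

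The main obstacle is purely bookkeeping: verifying the completing-the-square identity cleanly and making sure the admissible range of $\mu$ (namely $\mu<1/\bar\rho$) is exactly what is needed to obtain a legitimate convex Moreau envelope parameter $\lambda>0$. There is no deep analytic obstruction beyond this, since the nontrivial half of the identity $f_\mu\leq\sup_c(f_c)_\mu$ is outsourced to \cite[Theorem~3.1]{MR4279933} via the reduction, while the easy inequality $f_\mu\geq\sup_c(f_c)_\mu$ is recovered a posteriori from the transfer formulas (and could also be observed directly from $f\geq f_c$).
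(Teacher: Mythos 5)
Your proposal is correct and follows essentially the same route as the paper: both reduce to the convex case via the shift $r_c=f_c+\tfrac{\bar\rho}{2}\|\cdot\|^2$, $h=\sup_c r_c$ from Lemma~\ref{sup_weakconv}, apply \cite[Theorem~3.1]{MR4279933} to the family $(r_c)$, and transfer back through the identity $f_\mu(x)=h_{\mu/(1-\mu\bar\rho)}((1-\mu\bar\rho)^{-1}x)-\tfrac{\bar\rho}{2}(1-\mu\bar\rho)^{-1}\|x\|^2$. The only cosmetic differences are that you derive this transfer formula by completing the square where the paper cites \cite[Theorem~3.4(b)]{MR1230710}, and you obtain attainment from the cited convex result rather than from the paper's direct observation that $c\mapsto(f_c)_\mu(x)$ is upper semicontinuous as an infimum of upper semicontinuous functions.
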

	
	\begin{proof}
		Let $\mu\in ]0,1/\bar{\rho}[$ and $x\in\H$. By Lemma~\ref{sup_weakconv}, we have that $f\in\Gamma_{\bar{\rho}}(\H)$ and $h=\sup_{c\in\C}r_{c}$, where $h$ and the functions $r_{c}$ are defined by \eqref{def_h_rc}.
		Thus, from \eqref{def_h_rc}, \cite[Theorem~3.4(b)]{MR1230710}, and \cite[Theorem~3.1]{MR4279933}, it follows that
		\begin{align*}
			f_{\mu}(x)&=-\frac{1}{2}\bar{\rho}(1-\mu\bar{\rho})^{-1}\|x\|^{2} + h_{\frac{\mu}{1-\mu\bar{\rho}}}((1-\mu\bar{\rho})^{-1}x)\\
			&=\sup_{c\in\C}-\frac{1}{2}\bar{\rho}(1-\mu\bar{\rho})^{-1}\|x\|^{2}+(r_{c})_{\frac{\mu}{1-\mu\bar{\rho}}}((1-\mu\bar{\rho})^{-1}x)\nonumber\\
			&=\sup_{c\in\C}(f_{c})_{\mu}(x)\nonumber.
		\end{align*}
		Now, suppose that $\C$ is compact and the function $c\mapsto f_{c}(x)$ is upper semicontinuous for all $x\in\H$. Then for every $x\in\H$ the function $c\mapsto (f_{c})_{\mu}(x)$ is upper semicontinuous since it is the infimum of upper semicontinuous functions. Thus, the supremum in \eqref{result_teo_sup} is attained. \qed
	\end{proof}

	The next result is a generalization of \cite[Theorem~3.5]{MR4279933} and provide a formula for the gradient of the Moreau envelope and the proximity operator of  \eqref{fun_sup}. Given $\mu>0$, $f=\sup_{c\in\C}f_{c}$, and $x\in\H$, we define the following set
	\begin{align*}
		\C_{\mu}^{f}(x):=\left\{c\in\C:f_{\mu}(x)=(f_{c})_{\mu}(x)\right\}.
	\end{align*}
	\begin{theorem}
		\label{prop_dif_env_sup}
		Let $\C\neq\emptyset$ be a compact convex set and $(f_{c})_{c\in\C}$ be a family of functions such that $f_{c}\in\Gamma_{\rho_{c}}(\H)$ for all $c\in\C$. Suppose that $f=\sup_{c\in\C} f_{c}$ is proper,   $\bar{\rho}:=\sup_{c\in \C} \rho_{c}<+\infty$, and that $c\mapsto f_{c}(x)$ is concave and u.s.c. for all $x\in \H$. Then, for all $\mu\in ]0,1/\bar{\rho}[$ and $x\in\H$, the set $\C_{\mu}^{f}(x)$ is nonempty,
$$
\prox_{\mu f}(x)=\prox_{\mu f_{c}}(x), \textrm{ and } \nabla f_{\mu}(x)=\frac{x-\prox_{\mu f_{c}}(x)}{\mu}\quad\text{for all } c\in\C_{\mu}^{f}(x).
$$
	\end{theorem}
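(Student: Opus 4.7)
The plan is to bootstrap the whole statement from Theorem~\ref{teo_sup} together with the standard Moreau calculus for weakly convex functions.

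First, I would verify that $\C_{\mu}^{f}(x)$ is nonempty. Since $\C$ is compact and $c\mapsto f_{c}(x)$ is concave and upper semicontinuous, Theorem~\ref{teo_sup} applies and its final assertion gives that the supremum in $f_{\mu}(x)=\sup_{c\in\C}(f_{c})_{\mu}(x)$ is attained, which is exactly the claim $\C_{\mu}^{f}(x)\neq\emptyset$. Also, by Lemma~\ref{sup_weakconv}, $f\in\Gamma_{\bar{\rho}}(\H)$, so for $\mu\in\,]0,1/\bar{\rho}[$ the proximity operator $\prox_{\mu f}(x)$ is well defined as a unique minimizer (and similarly for each $f_{c}$, since $\rho_{c}\leq\bar{\rho}$).

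Next, the heart of the argument is an extremality computation. Fix $c\in\C_{\mu}^{f}(x)$ and set $y_{c}:=\prox_{\mu f_{c}}(x)$. From $f\geq f_{c}$ pointwise one obtains
\begin{equation*}
f(y_{c})+\tfrac{1}{2\mu}\|x-y_{c}\|^{2}\;\geq\;f_{c}(y_{c})+\tfrac{1}{2\mu}\|x-y_{c}\|^{2}\;=\;(f_{c})_{\mu}(x)\;=\;f_{\mu}(x),
\end{equation*}
where the first equality is the definition of $\prox_{\mu f_{c}}(x)$ and the second uses $c\in\C_{\mu}^{f}(x)$. On the other hand, the infimum definition of $f_{\mu}(x)$ yields the reverse inequality $f(y_{c})+\tfrac{1}{2\mu}\|x-y_{c}\|^{2}\geq f_{\mu}(x)$ majorizing some candidate value, while $f_{\mu}(x)\leq f(y_{c})+\tfrac{1}{2\mu}\|x-y_{c}\|^{2}$ holds trivially. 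Combining these forces equality throughout; so $y_{c}$ attains the infimum defining $f_{\mu}(x)$, and by uniqueness of the prox of $f\in\Gamma_{\bar{\rho}}(\H)$ at scale $\mu<1/\bar{\rho}$, we conclude $\prox_{\mu f}(x)=y_{c}=\prox_{\mu f_{c}}(x)$. As a byproduct, $f(y_{c})=f_{c}(y_{c})$.

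Finally, the gradient formula follows from the standard identity $\nabla g_{\mu}(x)=(x-\prox_{\mu g}(x))/\mu$ valid for $g\in\Gamma_{\rho}(\H)$ and $\mu<1/\rho$ (as used in the paper via \cite[Theorem~3.4]{MR1230710}): apply this to $f\in\Gamma_{\bar{\rho}}(\H)$ and substitute $\prox_{\mu f}(x)=\prox_{\mu f_{c}}(x)$ just established. I do not foresee a genuine obstacle here: the only subtle point is making sure that the supremum attainment in Theorem~\ref{teo_sup} really transfers to prox attainment, which is handled cleanly by the pointwise domination $f\geq f_{c}$ combined with the Moreau envelope being an infimum.
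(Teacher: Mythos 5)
There is a genuine gap in your central ``extremality computation'': the three inequalities you combine are all the \emph{same} inequality, so nothing forces equality. Starting from $y_{c}:=\prox_{\mu f_{c}}(x)$, the domination $f\geq f_{c}$ gives $f(y_{c})+\frac{1}{2\mu}\|x-y_{c}\|^{2}\geq f_{c}(y_{c})+\frac{1}{2\mu}\|x-y_{c}\|^{2}=(f_{c})_{\mu}(x)=f_{\mu}(x)$; but the ``reverse inequality'' you then invoke, $f_{\mu}(x)\leq f(y_{c})+\frac{1}{2\mu}\|x-y_{c}\|^{2}$, is literally the same statement (the Moreau envelope is an infimum, so it is automatically $\leq$ its value at any candidate point). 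To conclude that $y_{c}$ attains the infimum defining $f_{\mu}(x)$ you would need the opposite bound $f(y_{c})+\frac{1}{2\mu}\|x-y_{c}\|^{2}\leq f_{\mu}(x)$, i.e.\ $f(y_{c})\leq f_{c}(y_{c})$, which is precisely what $f\geq f_{c}$ does \emph{not} give you. As written, the step fails.

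The argument is salvageable by evaluating at the other prox point. Let $\bar{y}:=\prox_{\mu f}(x)$, which exists and is unique because $f\in\Gamma_{\bar{\rho}}(\H)$ (Lemma~\ref{sup_weakconv}) and $\mu<1/\bar{\rho}$. Then, for $c\in\C_{\mu}^{f}(x)$,
\begin{equation*}
f_{\mu}(x)=f(\bar{y})+\tfrac{1}{2\mu}\|x-\bar{y}\|^{2}\;\geq\; f_{c}(\bar{y})+\tfrac{1}{2\mu}\|x-\bar{y}\|^{2}\;\geq\;(f_{c})_{\mu}(x)=f_{\mu}(x),
\end{equation*}
and now the chain closes on itself: $\bar{y}$ minimizes $y\mapsto f_{c}(y)+\frac{1}{2\mu}\|x-y\|^{2}$, so by uniqueness of $\prox_{\mu f_{c}}(x)$ (valid since $\rho_{c}\leq\bar{\rho}$) one gets $\prox_{\mu f}(x)=\bar{y}=\prox_{\mu f_{c}}(x)$, with $f(\bar{y})=f_{c}(\bar{y})$ as a byproduct. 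With this repair your route---nonemptiness from Theorem~\ref{teo_sup}, a direct sandwich argument, then the gradient identity for $f\in\Gamma_{\bar{\rho}}(\H)$---is correct and genuinely different from the paper's proof, which instead shifts by $\frac{\bar{\rho}}{2}\|\cdot\|^{2}$ to reduce to the convex case and cites \cite[Theorem~3.5]{MR4279933} together with \cite[Theorem~3.4]{MR1230710}; the corrected direct argument is more self-contained, while the paper's reuses the convex machinery wholesale.
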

	
	\begin{proof}
		Let $\mu\in ]0,1/\bar{\rho}[$ and $x\in\H$. First, by Theorem~\ref{teo_sup}, the set $\C_{\mu}^{f}(x)$ is nonempty. Thus, by Lemma~\ref{sup_weakconv} and \cite[Theorem~3.4(d)]{MR1230710}, $f_{\mu}\in C^{1,1}$ and
$$
			\nabla f_{\mu}(x)=\frac{1}{\mu}(x-\prox_{(\frac{\mu}{1-\mu\bar{\rho}}) h}((1-\mu \bar{\rho})^{-1}x))\text{ for all }x\in\H,
$$
		where $h$ is defined by \eqref{def_h_rc}.
		Now, since the function $c\mapsto r_{c}(x)$ is concave and u.s.c. for all $x\in\H$, $\C$ is compact, and $h=\sup_{c\in\C}r_{c}$, then from \cite[Theorem~3.5]{MR4279933}, it follows that for all $c\in\C_{\frac{\mu}{1-\mu\bar{\rho}}}^{h}((1-\mu\bar{\rho})^{-1}x)$
		\begin{align*}
			\prox_{(\frac{\mu}{1-\mu\bar{\rho}}) h}((1-\mu\bar{\rho})^{-1}x)&=\prox_{(\frac{\mu}{1-\mu\bar{\rho}}) r_{c}}((1-\mu\bar{\rho})^{-1}x),
		\end{align*}
		which yields from \cite[Theorem~3.4(d)]{MR1230710} and \eqref{def_h_rc} that
		\begin{align*}
			\nabla f_{\mu}(x)=\frac{1}{\mu}(x-\prox_{\mu f_{c}}(x))\quad\text{for all }c\in\C_{\frac{\mu}{1-\mu\bar{\rho}}}^{h}((1-\mu\bar{\rho})^{-1}x).
		\end{align*}
		Since $r_{c}=f_{c}+\frac{\bar{\rho}}{2}\|\cdot\|^{2}$ and $h=f+\frac{\bar{\rho}}{2}\|\cdot\|^{2}$, we see from \cite[Theorem~3.4(b)]{MR1230710} that $\C_{\frac{\mu}{1-\mu\bar{\rho}}}^{h}\left(\frac{x}{1-\mu\bar{\rho}}\right) = \C_{\mu}^{f}(x)$. Finally, $\nabla f_{\mu}(x)=\frac{1}{\lambda}(x-\prox_{\mu f}(x))$, whence $\prox_{\mu f}(x)=\prox_{\mu f_{c}}(x)$ for all $x\in\H$ and $c\in\C_{\mu}^{f}(x)$. \qed
	\end{proof}
 In the following subsections, based on Theorem~\ref{prop_dif_env_sup}, we compute the proximity of two particular supremum functions which are weakly convex.
 \subsection{Supremum functions of weakly convex with quadratic functions}
 Consider the following function
\begin{align}
\label{sup_w_1}
\mathbf{x}\in\H^{N}\mapsto f(\mathbf{x})=\sup_{p\in\Delta_{N}}-\sum_{i=1}^{N}p_{i}\|x_{i}-\xi_{i}\|^{2}=\max_{1\leq i\leq N}-\|x_{i}-\xi_{i}\|^{2},
\end{align}
where $\xi_{i}\in\H$ for all $i\in\{1,\ldots,N\}$. The function $f$ in \eqref{sup_w_1} appears in discrete versions of DRO problems where the ambiguity set 
 is the the probability simplex $\Delta_{N}$, and the cost function is a weakly convex quadratic function. 
\begin{proposition}
\label{p_prox_sup_w_quad}
Let $\mathbf{x}\in\H^{N}$ and let $\mu\in\left]0,1/2\right[$. Then, $f$ is $\rho$-weakly convex with $\rho=2$ and
$$
\prox_{\mu f}(\mathbf{x})=\left(\dfrac{x_{i}-2\mu \bar{p}_{i}\xi_{i}}{1-2\mu \bar{p}_{i}}\right)_{i=1}^{N},
$$
 where $\bar{p}\in\Delta_{N}$ is a solution to
 \begin{align}
\label{prob_aux_w}
&\max_{p\in\Delta_{N}} \phi(p):=\sum_{i=1}^{N}\left(\frac{p_{i}}{2\mu p_{i}-1}\right)\alpha_{i} \textrm{ and } \alpha_{i}=\|x_{i}-\xi_{i}\|^{2} \textrm{ for all } i\in\{1,\ldots,N\}.
\end{align}
\end{proposition}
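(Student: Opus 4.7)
The plan is to recognize $f$ as the supremum of the family $(f_p)_{p\in\Delta_N}$ with $f_p(\mathbf{x})=-\sum_{i=1}^N p_i\|x_i-\xi_i\|^2$, invoke Theorem~\ref{prop_dif_env_sup} to reduce the computation of $\prox_{\mu f}(\mathbf{x})$ to the prox of a single, separable quadratic $f_{\bar p}$, and finally identify the selector $\bar p$ as a maximizer of a finite-dimensional concave program on the simplex.

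First, I would check that Theorem~\ref{prop_dif_env_sup} applies. The Hessian of $f_p$ restricted to the $i$-th block is $-2 p_i I$, so $f_p+\|\cdot\|^2$ is convex block-wise since $1-p_i\geq 0$ for every $p\in\Delta_N$; hence $f_p\in\Gamma_{2}(\H^N)$ for each $p$ and $\bar\rho:=\sup_{p\in\Delta_N}\rho_p\leq 2$. Lemma~\ref{sup_weakconv} then gives that $f\in\Gamma_2(\H^N)$, which accounts for the value $\rho=2$ in the statement. Moreover, $\Delta_N$ is compact and convex, and $p\mapsto f_p(\mathbf{x})$ is affine, hence concave and u.s.c. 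Therefore, for any $\mu\in(0,1/2)$ the set $\mathcal{C}_\mu^f(\mathbf{x})$ is nonempty and Theorem~\ref{prop_dif_env_sup} yields $\prox_{\mu f}(\mathbf{x})=\prox_{\mu f_{\bar p}}(\mathbf{x})$ for every $\bar p\in\mathcal{C}_\mu^f(\mathbf{x})$.

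Second, I would compute $\prox_{\mu f_p}(\mathbf{x})$ by block separation. Since $f_p$ splits across coordinates, the minimization decouples into $N$ problems of the form $\min_{y_i}-\mu p_i\|y_i-\xi_i\|^2+\tfrac12\|y_i-x_i\|^2$. Setting the gradient to zero, and using that $1-2\mu p_i>0$ because $\mu<1/2$ and $p_i\leq 1$, yields the unique minimizer
\begin{equation*}
y_i=\frac{x_i-2\mu p_i\,\xi_i}{1-2\mu p_i},
\end{equation*}
which matches the announced coordinate formula once $\bar p$ is identified.

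Third, I would identify $\mathcal{C}_\mu^f(\mathbf{x})$ with the argmax in \eqref{prob_aux_w}. Plugging $\mathbf{y}=\prox_{\mu f_p}(\mathbf{x})$ into $(f_p)_\mu(\mathbf{x})=f_p(\mathbf{y})+\tfrac{1}{2\mu}\|\mathbf{x}-\mathbf{y}\|^2$, and using the identities $y_i-\xi_i=(x_i-\xi_i)/(1-2\mu p_i)$ and $x_i-y_i=-2\mu p_i(x_i-\xi_i)/(1-2\mu p_i)$, a direct algebraic simplification produces
\begin{equation*}
(f_p)_\mu(\mathbf{x})=-\sum_{i=1}^N\frac{p_i\,\alpha_i}{1-2\mu p_i}=\phi(p).
\end{equation*}
By Theorem~\ref{teo_sup}, $f_\mu(\mathbf{x})=\sup_{p\in\Delta_N}\phi(p)$, so $\mathcal{C}_\mu^f(\mathbf{x})$ coincides with the solution set of \eqref{prob_aux_w}, and the claimed formula for $\prox_{\mu f}(\mathbf{x})$ follows from the identity established in step two. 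The main technical step is the algebraic simplification of $(f_p)_\mu(\mathbf{x})$ into the compact form $\phi(p)$; everything else is a routine application of the preceding results.
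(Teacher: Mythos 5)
Your proposal is correct and follows essentially the same route as the paper: decompose $f$ as the supremum over $\Delta_N$ of the separable weakly convex functions $f_p$, apply Theorem~\ref{prop_dif_env_sup} to reduce to $\prox_{\mu f_{\bar p}}$, compute the block-wise prox, and identify $(f_p)_\mu(\mathbf{x})=\phi(p)$ so that the selector $\bar p$ solves \eqref{prob_aux_w}. The only cosmetic difference is that you verify $2$-weak convexity of $f_p$ via the block Hessian whereas the paper splits $f_p+\|\cdot\|^2$ algebraically as a convex combination; both yield the same conclusion.
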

\begin{proof}
Define $f_{i}(x):=-\|x-\xi_{i}\|^{2}$ for all $i\in\{1,\ldots,N\}$ and $\mathbf{x}\in\H^{N}\mapsto g_{p}(\mathbf{x})=\sum_{i=1}^{N}p_{i}f_{i}(x_{i})$ for all $p\in\Delta_{N}$. We claim that $g_{p}$ is weakly convex with parameter $\rho=2$ for all $p\in\Delta_{N}$. Note that
 \begin{align*}
\sum_{i=1}^{N}p_{i}f_{i}(x_{i})+\|\mathbf{x}\|^{2}
=\sum_{i=1}^{N}p_{i}(f_{i}(x_{i})+\|x_{i}\|^{2})+\sum_{i=1}^{N}(1-p_{i})\|x_{i}\|^{2}.
 \end{align*}
 Then, since $f_{i}+\|\cdot\|^{2}$ is convex and $p_{i}\leq 1$ for all $i\in\{1,\ldots,N\}$,  $g_{p}+\|\cdot\|^{2}$ is convex for all $p\in\Delta_{N}$. Thus, by Lemma~\ref{sup_weakconv}, $f=\sup_{p\in\Delta_{N}}g_{p}$ is proper and $\rho$-weakly convex with $\rho=2$. On the other hand, $p\mapsto g_{p}(\mathbf{x})$ is concave and u.s.c. for all $\mathbf{x}\in\H^{N}$. Thus, from Theorem~\ref{prop_dif_env_sup}, for every $\mu\in\left]0,1/2\right[$,
 \begin{align}
\label{prox_gp_w_1}
\prox_{\mu f}(\mathbf{x})=\prox_{\mu g_{\bar{p}}}(\mathbf{x}),\quad\text{with }\bar{p}\in\operatorname{argmax}_{p\in\Delta_{N}}(g_{p})_{\mu}(\mathbf{x}).
 \end{align}
 Now, observe that for all $i\in\{1,\ldots,N\}$ and $x\in\H$, $y=\prox_{\mu (p_{i}f_{i})}(x)$ if and only if $y=\frac{x-2\mu p_{i}\xi_{i}}{1-2\mu p_{i}}$. Then,  we have that
 \begin{equation*}
\prox_{\mu g_{\bar{p}}}(\mathbf{x})=\left(\dfrac{x_{i}-2\mu \bar{p}_{i}\xi_{i}}{1-2\mu \bar{p}_{i}}\right)_{i=1}^{N}.
 \end{equation*}
 Let us calculate $(g_{p})_{\mu}(\mathbf{x})$. By the above formula, we obtain that
\begin{equation*}
(g_{p})_{\mu}(\mathbf{x})=g_{p}(\prox_{\mu g_{p}}(\mathbf{x}))+\frac{1}{2\mu}\|\mathbf{x}-\prox_{\mu g_{p}}(\mathbf{x})\|^{2}=\sum_{i=1}^{N}\left(\frac{p_{i}}{2\mu p_{i}-1}\right)\|x_{i}-\xi_{i}\|^{2}.
\end{equation*}
Set $\alpha_{i}=\|x_{i}-\xi_{i}\|^{2}$ for all $i\in\{1,\ldots,N\}$. Hence, in order to find $\bar{p}$ in \eqref{prox_gp_w_1}, we need to solve problem \eqref{prob_aux_w}. Since $\phi$ is continuous and $\Delta_{N}$ is a nonempty compact set, problem \eqref{prob_aux_w} has solutions. \qed
\end{proof}

In the context of problem~\eqref{prob_aux_w}, if $\alpha_{i}=0$ for some $i\in\{1,\ldots,N\}$, then a solution of problem~\eqref{prob_aux_w} is $p=e_{i}$, where $e_{i}\in\R^{N}$ is the canonical vector in $\R^{N}$. Then, we can assume that $\alpha_{i}>0$ for all $i\in\{1,\ldots,N\}$. We note that
\begin{equation*}
(\nabla^{2}\phi(p))_{ij}=\begin{cases}
4\mu\alpha_{i}(2\mu p_{i}-1)^{-3} & \text{ if } i=j\\
0 & \text{ if }i\neq j
\end{cases}\quad\text{for all } i,j\in\{1,\ldots,N\}.
\end{equation*}
Then, $\nabla^{2}(-\phi)(p)$ is a positive definite matrix for all $p\in\Delta_{N}$ since $\mu<1/2$. Thus $-\phi$ is strictly convex on $\Delta_{N}$. In addition, $\Delta_{N}$ is a compact convex set and $-\phi$ is continuous. Therefore, the problem~\eqref{prob_aux_w} has an unique solution.

Now, we provide a solution of \eqref{prob_aux_w}. The proof is given in Section \ref{Annex-C}.
 \begin{proposition}
		\label{prop_prob_aux_w}
		In the context of \eqref{prob_aux_w}, let $\{\ell_{i}\}_{i=1}^{N}$ such that $\alpha_{\ell_{1}}\geq\cdots\geq\alpha_{\ell_{N}}$ with $\alpha_{i}>0$ for all $i\in\{1,\ldots,N\}$ and let $I_{i}:=\{\ell_{1},\ldots,\ell_{i}\}$ for all $i\in\{1,\ldots,N\}$. Define
		\begin{align}
			\label{def_k_w}
			k:=\min\{i\in\{0,\ldots,N-1\}:(N-i-2\mu)\sqrt{\alpha_{\ell_{i+1}}}<\sum_{j\notin I_{i}}\sqrt{\alpha_{j}}\},
		\end{align}
		where $I_{0}:=\emptyset$. Then, $\bar{p}\in \R^{N}$ defined by
		\begin{align}
			\label{sol_prob_aux_w}
			\bar{p}_{i}=
			\begin{cases}
				0 & \text{if}\,\, i\in I_{k}\\
				\dfrac{1}{2\mu}[1-\frac{(N-k-2\mu)\sqrt{\alpha_{i}}}{\sum_{j\notin I_{k}}\sqrt{\alpha_{j}}}]& \text{if}\,\, i\notin I_{k}
			\end{cases}\quad\text{ for all } i\in\{1,\ldots,N\}
		\end{align}
		is the solution to problem \eqref{prob_aux_w}.
	\end{proposition}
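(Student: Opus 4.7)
The plan is to apply the KKT conditions to the problem of maximizing $\phi$ over $\Delta_N$ and to solve them explicitly. Just before the statement it was established that $-\phi$ is strictly convex and continuous on the nonempty compact convex polytope $\Delta_N$; moreover, Slater's condition is trivially satisfied (e.g., $(1/N,\ldots,1/N)$ lies in the relative interior). Hence the KKT conditions are necessary and sufficient for optimality, the maximizer is unique, and it suffices to exhibit a KKT point and match it with the formula \eqref{sol_prob_aux_w}.

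A direct computation gives $\partial \phi/\partial p_i=-\alpha_i/(1-2\mu p_i)^2$. Introducing a multiplier $\lambda\in\R$ for $\sum_i p_i=1$ and multipliers $\nu_i\geq 0$ for $p_i\geq 0$, and setting $c:=-\lambda$, the KKT system reads
\[
\frac{\alpha_i}{(1-2\mu p_i)^2}=c+\nu_i,\qquad \nu_i p_i=0,\qquad \nu_i\geq 0,\qquad \sum_i p_i=1.
\]
Since $\alpha_i>0$, the case $p_i>0$ forces $\nu_i=0$, $c>0$, and (using $\mu<1/2$, so that $1-2\mu p_i>0$) yields $p_i=(1-\sqrt{\alpha_i/c})/(2\mu)$, valid precisely when $\alpha_i<c$; the case $p_i=0$ forces $\alpha_i\geq c$. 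Consequently the inactive set consists of those indices carrying the largest $\alpha_i$'s, i.e.\ $\{i:p_i=0\}=I_k$ for some $k\in\{0,\ldots,N-1\}$. Imposing $\sum_i p_i=1$ then determines
\[
\sqrt{c}=\frac{\sum_{j\notin I_k}\sqrt{\alpha_j}}{N-k-2\mu},
\]
with $N-k-2\mu>0$ since $k\leq N-1$ and $\mu<1/2$. Substituting this back into the formula for $p_i$ recovers exactly the expression in \eqref{sol_prob_aux_w}.

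What remains is to identify $k$ as the integer defined in \eqref{def_k_w}. Consistency of the KKT configuration amounts to the threshold inequalities $\sqrt{\alpha_{\ell_{k+1}}}<\sqrt{c}\leq\sqrt{\alpha_{\ell_k}}$; rewriting them using the formula for $\sqrt{c}$ gives
\[
(N-k-2\mu)\sqrt{\alpha_{\ell_{k+1}}}<\sum_{j\notin I_k}\sqrt{\alpha_j}\quad\text{and}\quad (N-k-2\mu)\sqrt{\alpha_{\ell_k}}\geq \sum_{j\notin I_k}\sqrt{\alpha_j}.
\]
The first bound is precisely the inequality defining $k$ in \eqref{def_k_w}. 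The main obstacle will be establishing the second bound, which I plan to obtain from the minimality of $k$: for $k\geq 1$, the inequality in \eqref{def_k_w} must fail at $i=k-1$, so $(N-k+1-2\mu)\sqrt{\alpha_{\ell_k}}\geq \sum_{j\notin I_{k-1}}\sqrt{\alpha_j}=\sqrt{\alpha_{\ell_k}}+\sum_{j\notin I_k}\sqrt{\alpha_j}$, and subtracting $\sqrt{\alpha_{\ell_k}}$ from both sides produces the desired lower bound. The case $k=0$ is vacuous (no lower threshold is needed), and the minimum in \eqref{def_k_w} is well defined because at $i=N-1$ the defining inequality reduces to $(1-2\mu)\sqrt{\alpha_{\ell_N}}<\sqrt{\alpha_{\ell_N}}$, which holds since $\mu>0$.
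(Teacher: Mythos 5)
Your proposal is correct and follows essentially the same route as the paper: both reduce the problem to the KKT system for maximizing $\phi$ over $\Delta_N$ (sufficient by convexity of $-\phi$), identify the same multiplier value $c=-\tau=\bigl(\sum_{j\notin I_k}\sqrt{\alpha_j}\bigr)^2/(N-k-2\mu)^2$, and establish the two threshold inequalities in the same way — the defining inequality of $k$ for nonnegativity of the $\bar p_i$ with $i\notin I_k$, and the failure of that inequality at $i=k-1$ for nonnegativity of the multipliers on $I_k$. The only difference is presentational: you derive the candidate from the KKT system, whereas the paper plugs in the stated formula and verifies it.
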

 
\subsection{Supremum function weakly convex with affine functions}
Inspired in \cite[Theorem~3]{pmlr-v89-sun19b}, we consider now the function
 \begin{align}
\label{fun_sup_weak}
x\in\R^{n}\mapsto f(x)=\sup_{c\in\C}\sum_{i=1}^{N}c_{i}(\langle a_{i},x\rangle+b_{i})-\sigma\|x\|^{2},
 \end{align}
 where $a_{i}\in\R^{n}$, $b_{i}\in\R$, $\sigma>0$, and $\C\subset\Delta_{N}$ is a nonempty closed convex set. Clearly, $f$ is $\rho$-weakly convex with $\rho=2\sigma$. Note that the function $f$ appears in the discrete version of the DRO problem when the ambiguity set is the set $\C$ and the cost function is weakly convex involving affine functions. The next result provides an algorithm for the proximity of the function $f$ in \eqref{fun_sup_weak}.
  \begin{proposition}
\label{p_prox_sup_w_aff}
Let $x\in\R^{n}$ and $\mu\in]0,\frac{1}{2\sigma}[$. Let $A=\begin{bmatrix} a_{1}|\cdots|a_{N} \end{bmatrix}^{\top}$, $b=\begin{pmatrix} b_{1}\cdots b_{N}\end{pmatrix}^{\top}$, and  $\gamma>0$ such that $\frac{\gamma\mu\|AA^{\top}\|}{1-2\sigma\mu}<1$. Let $(\alpha_{k})\subset [0,1]$ such that $\sum_{k\in\mathbb{N}}\alpha_{k}(1-\alpha_{k})=+\infty$ and let $c^{0}\in\R^{N}$. For every $k\in\mathbb{N}$, we consider  
\begin{equation*}
y^{k}=\frac{1}{1-2\sigma \mu}(x-\mu A^{\top}c^{k}),  z^{k}=c^{k}+\gamma(Ay^{k}+b), 
c^{k+1}=\alpha_{k}P_{\C}(z^{k})+(1-\alpha_{k})c^{k}.
\end{equation*}
Then, $y^{k}\rightarrow \prox_{\mu f}(x)$.
 \end{proposition}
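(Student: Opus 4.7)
The plan is to recognize the iteration as a primal-dual scheme: the auxiliary point $y^k$ is the inner primal minimizer of a saddle-point reformulation of $\prox_{\mu f}(x)$, while $c^k$ is produced by a relaxed projected gradient (Krasnoselski{\u\i}--Mann) iteration on the dual function. The payoff is that the convergence of $y^k$ reduces to the classical convergence of KM iterates of an averaged operator.

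First, I would rewrite $f(y)=\sup_{c\in\C}\langle c,Ay+b\rangle-\sigma\|y\|^2$ and consider the Lagrangian $L(y,c):=\langle c,Ay+b\rangle-\sigma\|y\|^2+\frac{1}{2\mu}\|y-x\|^2$. Since $\mu<1/(2\sigma)$, the map $y\mapsto L(y,c)$ is $(1/\mu-2\sigma)$-strongly convex uniformly in $c$, while $c\mapsto L(y,c)$ is affine (hence concave and u.s.c.) and $\C$ is a compact convex subset of $\Delta_N$. Sion's minimax theorem then yields a saddle point $(y^*,c^*)$ with $y^*=\prox_{\mu f}(x)$. Explicitly minimizing $L(\cdot,c)$ gives the closed form $y^*(c)=\frac{1}{1-2\sigma\mu}(x-\mu A^\top c)$, which is exactly the update defining $y^k$ from $c^k$. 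Danskin's theorem applied to $\Phi(c):=\min_y L(y,c)$ gives $\nabla\Phi(c)=Ay^*(c)+b$, and a direct computation shows $\nabla^2\Phi(c)=-\frac{\mu}{1-2\sigma\mu}AA^\top$, so $\Phi$ is concave with $L_\Phi:=\frac{\mu\|AA^\top\|}{1-2\sigma\mu}$-Lipschitz gradient.

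Next, I would identify the iteration with a KM scheme. Observe that $z^k=c^k+\gamma\nabla\Phi(c^k)$ and therefore
\[
c^{k+1}=(1-\alpha_k)c^k+\alpha_k T(c^k),\qquad T:=P_\C\circ(\mathrm{Id}+\gamma\nabla\Phi).
\]
Because $-\Phi$ is convex with $L_\Phi$-Lipschitz gradient, the Baillon--Haddad theorem implies that $\mathrm{Id}+\gamma\nabla\Phi$ is $\frac{\gamma L_\Phi}{2}$-averaged for $\gamma L_\Phi<2$; the hypothesis $\gamma L_\Phi<1$ certainly ensures this. Since $P_\C$ is firmly nonexpansive and the composition of two averaged operators is averaged, $T$ is averaged (hence nonexpansive). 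Moreover, the fixed-point set of $T$ coincides with $\arg\max_{c\in\C}\Phi$, which is nonempty because $\C$ is compact and $\Phi$ is continuous.

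With these ingredients in place, the classical Krasnoselski{\u\i}--Mann theorem (see, e.g., Bauschke--Combettes) guarantees, under $\sum_k\alpha_k(1-\alpha_k)=+\infty$, that $c^k$ converges to some $c^\infty\in\mathrm{Fix}(T)=\arg\max_\C\Phi$. Since $y^*(\cdot)$ is an affine (thus continuous) function of $c$, it follows that
\[
y^k=y^*(c^k)\longrightarrow y^*(c^\infty).
\]
To finish, I would note that by the saddle-point characterization above, $y^*(c^\infty)$ is precisely the unique primal minimizer $\prox_{\mu f}(x)$, since any maximizer of the dual recovers the primal solution via the primal--dual optimality system. The only mildly subtle step is justifying the primal--dual equivalence: Sion's theorem gives existence of a saddle point and equality of primal and dual values, after which the strong convexity in $y$ forces uniqueness of $y^*$ and its independence of the particular dual optimizer used to recover it; this is where I expect to spend the most care.
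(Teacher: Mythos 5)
Your proof is correct, and its computational core coincides with the paper's: you identify the same operator $T=P_{\C}\circ(\mathrm{Id}+\gamma(Ay_{(\cdot)}+b))$, recognize the update as a Krasnosel'ski\u{\i}--Mann iteration, characterize $\operatorname{Fix}T$ as the set of dual maximizers, and pass to the limit in the affine map $c\mapsto y^{*}(c)$. The genuine difference lies in how the fixed point is tied back to $\prox_{\mu f}(x)$. The paper invokes its own Theorem on suprema of weakly convex functions (the result stating $\prox_{\mu f}(x)=\prox_{\mu f_{\bar c}}(x)$ for any $\bar c\in\operatorname{argmax}_{c\in\C}(f_c)_\mu(x)$), so the dual problem $\max_{c}(f_c)_\mu(x)$ appears as an application of that machinery; you instead rederive this identification from scratch via Sion's minimax theorem and Danskin's theorem, using strong convexity of $y\mapsto L(y,c)$ to get uniqueness of the primal minimizer independently of the chosen dual optimizer. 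Your route is self-contained and makes the primal--dual structure explicit (including the concavity and $L_\Phi$-Lipschitz gradient of the dual function, which the paper leaves implicit), at the cost of re-proving in this special case what the paper obtains from its general theorem; the paper's route is shorter here precisely because it leans on that earlier result. A second, minor difference: you establish nonexpansiveness of $\mathrm{Id}+\gamma\nabla\Phi$ via Baillon--Haddad and averagedness, whereas the paper bounds $\|I-\frac{\gamma\mu AA^{\top}}{1-2\sigma\mu}\|_{2}\leq 1$ by direct linear algebra --- your argument is more structural and would survive a nonquadratic dual, while the paper's is more elementary. Both satisfy the hypotheses of the Krasnosel'ski\u{\i}--Mann theorem under $\sum_k\alpha_k(1-\alpha_k)=+\infty$, so the conclusion $y^{k}\to\prox_{\mu f}(x)$ follows in either case.
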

 \begin{proof} For every $c\in\C$, consider $f_{c}(x)=\sum_{i=1}^{N}c_{i}(\langle a_{i},x\rangle+b_{i})-\sigma\|x\|^{2}$. It is clear that $x\mapsto f_{c}(x)$ is weakly convex with parameter $2\sigma$ and that $c\mapsto f_{c}(x)$ is concave and upper semicontinuous for all $x\in\R^{n}$.  Moreover, it is clear that  $f=\sup_{c\in\C}f_{c}$ is proper.  Thus, by Theorem~\ref{prop_dif_env_sup}, we have that
 \begin{align}
 \label{prox_sup_w_aff}
\prox_{\mu f}(x)=\prox_{\mu f_{\bar{c}}}(x)\quad\text{with }\bar{c}\in\operatorname{argmax}_{c\in\C}(f_{c})_{\mu}(x).
 \end{align}
Moreover, for every $c\in\C$,  $y=\prox_{\mu f_{c}}(x)$ if and only if  $y=\frac{1}{1-2\sigma \mu}(x-\mu A^{\top}c)$. In order to find $\bar{c}\in\C$ in \eqref{prox_sup_w_aff}, we need to solve the following minimax problem
 \begin{equation*}
\max_{c\in\C}(f_{c})_{\mu}(x)=\max_{c\in\C}\inf_{y\in\R^{n}} f_{c}(y)+\dfrac{1}{2\mu}\|y-x\|^{2}.
 \end{equation*}
 The optimality condition of the above problem is $Ay+b\in N_{\C}(c)$ and $y=\prox_{\mu f_{c}}(x)$. Hence, we need to find $c\in\C$ such that $c=P_{\C}(c+\gamma(Ay_{c}+b))$, where $y_{c}=(1-2\sigma \mu)^{-1}(x-\mu A^{\top}c)$. Then, by defining $c\in\C\mapsto\mathcal{F}(c)=c+\gamma(Ay_{c}+b)$, we conclude that
\begin{align}
\label{carac_c}
c\in\operatorname{argmax}_{c\in\C}(f_{c})_{\mu}(x)\Leftrightarrow c=P_{\C}(\mathcal{F}(c)).
\end{align}
We claim that $\mathcal{F}$ is nonexpansive. Indeed, let $c_{1}$ and $c_{2}$ in $\C$. Then
\begin{align*}
\mathcal{F}(c_{1})-\mathcal{F}(c_{2})&=c_{1}-c_{2}-\dfrac{\gamma\mu AA^{\top}}{1-2\sigma\mu}(c_{1}-c_{2})=\left(I-\dfrac{\gamma\mu AA^{\top}}{1-2\sigma\mu}\right)(c_{1}-c_{2}).
\end{align*}
Let $B:=\dfrac{\gamma\mu AA^{\top}}{1-2\sigma\mu}$. It is easy to see that $\|I-B\|_{2}\leq 1$.  Define $T=P_{\C}\circ \mathcal{F}$. Then, since $P_{\C}$ is nonexpansive, we obtain that $T$ is nonexpansive. Hence,  we deduce that $c^{k+1}=\alpha_{k}Tc^{k}+(1-\alpha_{k})c^{k}$. From \cite[Theorem~5.15]{MR3616647}, there exists $\bar{c}\in\text{Fix}\,T$ such that $c^{k}\rightarrow \bar{c}$ and  $y^{k}\rightarrow \prox_{\mu f_{\bar{c}}}(x)$. Moreover, by \eqref{prox_sup_w_aff} and \eqref{carac_c}, $\prox_{\mu f_{\bar{c}}}(x)=\prox_{\mu f}(x)$. Therefore, $y^{k}\rightarrow \prox_{\mu f}(x)$. \qed
 \end{proof}
  \section{Applications}\label{sec_aplic}
  \subsection{Distributionally Robust Optimization}
Let $(\Omega,\mathcal{A})$ be a measurable space, let $\xi\colon \Omega\rightarrow \Xi\subset\R^{m}$ be a random vector with $\Xi\subset\R^{m}$ being an uncertainty set, let $\mathcal{P}$ be a nonempty closed convex subset of probability measures on $(\Omega,\mathcal{A})$ supported on $\Xi$ (called \textit{ambiguity set}), let $Q\subset\R^{n}$ be a compact convex set, and let $F\colon \R^{n}\times \Xi\rightarrow \R$ be a measurable function. The problem 
\begin{align}
\label{dro_cont}
\displaystyle\min_{x\in Q}\displaystyle\sup_{\mathbb{P}\in \mathcal{P}} \mathbb{E}_{\mathbb{P}}[F(x,\xi)]:=\int_{\Omega}F(x,\xi(\omega))d\mathbb{P}(\omega).
\end{align}
is called \textit{distributionally robust optimization} (DRO) problem \cite[eq~4.1.3]{sun2021robust}. The ambiguity set represents the information about the probability distributions. The problem~\eqref{dro_cont} covers the stochastic optimization problem when the ambiguity set is a singleton, and the robust optimization problem when the ambiguity set includes all possible distributions.  When $\Xi$ is finite, that is, $\Xi=\{\xi_{1},\ldots,\xi_{N}\}$ and denoting $p_{i}=\mathbb{P}(\{\omega\in\Omega:\xi(\omega)=\xi_{i}\})$ and $f_{i}=F(\cdot,\xi_{i})$, for every $\mathbb{P}\in\mathcal{P}$ and $i\in\{1,\ldots,N\}$, we have that the problem~\eqref{dro_cont} reduces to
\begin{align}
\label{dro_disc}
\min_{x\in Q}\sup_{p\in\mathcal{P}}\sum_{i=1}^{N}p_{i}f_{i}(x),
\end{align}
where $\mathcal{P}\subset\Delta_{N}:=\{p\in\R_{+}^{N}:\sum_{i=1}^{N}p_{i}=1\}$. When $Q=V\cap \mathcal{B}$, where $V$ is a closed vector subspace of $\R^{n}$ and $\mathcal{B}$ is a compact convex set, \eqref{dro_disc} reduces to
\begin{align}
\label{dro_disc_2}
\min_{x\in V\cap \mathcal{B}}\widetilde{g}(x),
\end{align}
where $\widetilde{g}(x)=\sup_{p\in\mathcal{P}}\sum_{i=1}^{N}p_{i}f_{i}(x)$. Note that if $f_{i}$ is l.s.c. for all $i\in\left\{1,\ldots,N\right\}$, then $\widetilde{g}$ is l.s.c.. Then, we can use the penalty method described in Section~\ref{penalty_method} for solving \eqref{dro_disc_2}. That is, we need to solve the penalized problem:
\begin{align*}
\min_{x\in V}\lambda P(x)+\widetilde{g}(x),
\end{align*}
where $\lambda>0$ and $P\colon\R^{n}\rightarrow\R$ is a l.s.c.  function such that $P(x)\geq 0$ for all $x\in\R^{n}$ and $P(x)=0$ if and only if $x\in \mathcal{B}$. When $P(x)=\frac{1}{2}d^2(x,\mathcal{B})$, the penalized problem is equivalent to
\begin{align}
\label{dro_disc_4}
\min_{x\in V}h(x)+\widetilde{g}(x),
\end{align}
where $h(x)=\frac{\lambda}{2}d^2(x,\mathcal{B})$. Then, $\nabla h(x)=\lambda(x-P_{\mathcal{B}}(x))$ and $\nabla h$ is Lipschitz with constant $\lambda$. On the other hand, when $f_{i}(x)=\langle a_{i},x\rangle+b_{i}-\sigma\|x\|^{2}$, where $a_{i}\in\R^{n}$, $b_{i}\in\R$, and $\sigma>0$, the proximity operator of $\widetilde{g}$ can be calculated using the algorithm in Proposition~\ref{p_prox_sup_w_aff}. Thus, \eqref{dro_disc_4} is a particular case of \eqref{prob_w_conv}. When $f_{i}(x)=\langle a_{i},x\rangle+b_{i}-\sigma\|x\|^{2}$, it is straightforward to prove that the objective function in \eqref{dro_disc_4} is coercive if $\lambda>2\sigma$ and hence, in this case, \eqref{dro_disc_4} has solutions for all $\lambda>2\sigma$.
We consider now the following DRO model \cite[Example~4]{MR4361979}
\begin{equation}
\label{prob_risk_averse}
\min_{\mathbf{x}=(x_{1},\ldots,x_{N})\in\R^{nN}}\max_{p\in\mathcal{P}} \sum_{i=1}^{N}p_{i}f_{i}(x_{i})\quad\text{s.t.}\quad \mathbf{x}\in\mathcal{N}\cap\bigtimes_{i=1}^{N}Q_{i},
\end{equation} 
where $\mathcal{N}\subset \R^{nN}$ is the nonanticipative set (vector subspace), $f_{i}\colon\R^{n}\rightarrow\R$ is the cost function in the scenario $i$, $Q_{i}$ is a compact convex subset, and $\mathcal{P}\subset\Delta_{N}$ is a nonempty closed convex ambiguity set. Note that \eqref{prob_risk_averse} is equivalent to
\begin{align}
\label{prob_risk_averse_2}
\min_{\mathbf{x}\in\mathcal{N}\cap \overline{Q}}g(\mathbf{x}),
\end{align}
where $\overline{Q}=\bigtimes_{i=1}^{N}Q_{i}$ and $g(\mathbf{x})=\sup_{p\in\mathcal{P}} \sum_{i=1}^{N}p_{i}f_{i}(x_{i})$. Observe that if $f_{i}$ is lower semicontinuous for all $i\in\{1,\ldots,N\}$, then $g$ also is lower semicontinuous. Similarly to \eqref{dro_disc_2}, in order to solve \eqref{prob_risk_averse_2}, we can use the penalty method described in Section~\ref{penalty_method} by solving the following problem
\begin{align}
\label{prob_risk_averse_3}
\min_{\mathbf{x}\in\mathcal{N}}\frac{\lambda}{2}d(\mathbf{x},\overline{Q})^{2}+g(\mathbf{x}),
\end{align}
where $\lambda>0$. In the case when $f_{i}(x)=-\|x-\xi_{i}\|^{2}$ and $\mathcal{P}=\Delta_{N}$, where $\xi_{i}\in\R^{n}$, the proximity operator of $g$ has a closed-form and is given in Proposition~\ref{p_prox_sup_w_quad} and Proposition~\ref{prop_prob_aux_w}. Moreover, in this case, we have that the objective function in \eqref{prob_risk_averse_3} is coercive if $\lambda>2$. Hence, in this case 
 \eqref{prob_risk_averse_3} has solutions for all $\lambda>2$. Thus, \eqref{prob_risk_averse_3} is a particular case of problem~\eqref{prob_w_conv}.

  \subsection{Constrained LASSO}
  A constrained LASSO problem with linear constraints has the following form:
  \begin{equation}
  \label{lasso_conv}
\min_{x\in \R^{n},  Rx=0}\|Bx-b\|^{2}+\|x\|_{1},
  \end{equation}
  where $\|\cdot\|_{1}$ is the $\ell^{1}$ norm which is a regularizer that induce sparsity in the solution vector $x$. The previous problem is convex and is a particular case of \eqref{prob_w_conv} when $V=\ker R$, $A$ is the identity, and $h(x)=\|Bx-b\|^{2}$. Another class of problems where the regularization with the norm $\|\cdot\|_{1}$ appears is in logistic regression \cite{shi2006lasso}. To use the norm $\|\cdot\|_{1}$ as a regularizer in \eqref{lasso_conv} tends to reduce the number of nonzero elements of the solution generating a bias. Therefore, nonconvex regularizers are often used to reduces bias. These covers the $\ell^{p}$-norms (with $p\in\left]0,1\right[$) which are not weakly convex, as well as several weakly convex regularizers, which now we illustrate. In \cite{zhang2010nearly} it is defined the minimax concave penalty (MCP) function: Given $\lambda>0$ and $\theta>0$, the MCP is the function $r_{\lambda,\theta}\colon\R\rightarrow\R_{+}$ defined by
  \begin{equation*}
r_{\lambda,\theta}(x)=\begin{cases}
\lambda |x|-\frac{x^{2}}{2\theta} & \text{if }\,\,|x|\leq\theta\lambda,\\
\frac{\theta\lambda^{2}}{2} & \text{otherwise }
\end{cases}
  \end{equation*}
  Observe that this function is $\theta^{-1}$-weakly convex. The proximity operator of $r_{\lambda,\theta}$ (called firm threshold) has a closed form and is given, for every $\gamma<\theta$, by
  \begin{equation*}
\prox_{\gamma r_{\lambda,\theta}}(x)=\begin{cases}
0 & \text{if }\,\, |x|<\gamma\lambda,\\
\frac{x-\lambda\gamma\sgn(x)}{1-\gamma/\theta} & \text{if }\,\,\gamma\lambda\leq |x|\leq \theta\lambda,\\
x & \text{if }\,\, |x|>\theta\lambda.
\end{cases}
  \end{equation*}
  Another weakly convex regularizer is the smoothly clipped absolute deviation (SCAD) \cite{MR1946581}, which is defined, for parameters $\lambda>0$ and $\theta>2$, by
  \begin{equation*}
r_{\lambda,\theta}(x)=\begin{cases}
\lambda|x| & \text{if }\,\,|x|\leq\lambda,\\
\frac{-x^{2}+2\lambda\theta|x|-\lambda^{2}}{2(\theta-1)} & \text{if }\,\, \lambda<|x|\leq\theta\lambda,\\
\frac{(\theta+1)\lambda^{2}}{2} & \text{if }\,\,|x|>\theta\lambda.
\end{cases}
  \end{equation*}
  This function is $(\theta-1)^{-1}$-weakly convex. The problem we have seen so far has the structure of \eqref{prob_w_conv} when $A$ is the identity. Let us consider the case $A\neq \Id$. We consider the \textit{Tukey biweight} function \cite{beaton1974fitting}, which is given by
  \begin{align*}
x\in\R^{n}\mapsto \sum_{i=1}^{m}\varphi(A_{i\bullet}x-b_{i}),
  \end{align*}
  where $\varphi(\theta)=\frac{\theta^{2}}{1+\theta^{2}}$, $A\in\R^{m\times n}$, and $A_{i\bullet}$ denotes the $i$th row of $A$. Note that the above function can be written as $g(Ax)$ with  $g(y)=\sum_{i=1}^{m}\varphi(y_{i}-b_{i})$ for $y\in\R^{m}$. Moreover, the Tukey biweight function is $\rho$-weakly convex with $\rho=6$.
  \subsection{Max Dispersion Problems}
 Consider the following max dispersion problem (or max-min location problem):
 \begin{align}
 \label{max_dis}
\max_{x\in S}\min_{1\leq i\leq N}\|x-u_{i}\|^{2},
 \end{align}
 where $S\subset\R^{n}$ is a compact subset and $u_{i}\in\R^{n}$ for all $i\in\{1,\ldots,N\}$. The problem~\eqref{max_dis} consists in to find a point in $S$ that is furthest from the given points $u_{1},\ldots,u_{N}$ and has the geometric interpretation of finding the largest Euclidian ball with center in $S$ and that does not enclose any given point. Note that $x\mapsto \min_{1\leq i\leq N}\|x-u_{i}\|^{2}$ is continuous since is the minimum of finitely many continuous functions. Then, problem~\eqref{max_dis} has solutions. The problem \eqref{max_dis} have been studied in \cite{jeyakumar2018exact} when the constraint $S$ is a polyhedral or a ball, and has applications in facility location, pattern recognition, among others \cite{dasarathy1980maxmin,johnson1990minimax}. In addition, it is shown in \cite{haines2013convex} that the max dispersion problem over a box constraint is a NP-hard problem.
 
Consider the case when $S=V\cap \mathcal{B}$, where $V\subset\R^{n}$ is a vector subspace and $\mathcal{B}\subset\R^{n}$ is a compact convex set. In this case, \eqref{max_dis} is equivalent to
 \begin{align}
 \label{max_dis_2}
\min_{x\in V\cap \mathcal{B}}\max_{1\leq i\leq N}-\|x-u_{i}\|^{2}.
 \end{align}
 In order to solve \eqref{max_dis_2}, we use the penalty method described in Section~\ref{penalty_method}. That is, we solve the following penalized problem
 \begin{align}
 \label{pen_max_dis}
\min_{x\in V}\lambda P(x)+\max_{1\leq i\leq N}-\|x-u_{i}\|^{2},
 \end{align}
     where $P$ is the penalty function which is lower semicontinuous and satisfies that $P(x)\geq 0$ for all $x\in\R^{n}$ and $P(x)=0$ if and only if $x\in \mathcal{B}$, while $\lambda>0$ is the penalty coefficient. Consider the penalty function $P(x)=\frac{1}{2}d(x,\mathcal{B})^{2}$. Then, the problem~\eqref{pen_max_dis} is equivalent to
 \begin{align}
 \label{app_w_1}
\min_{x\in V}h(x)+\widetilde{g}(x),
 \end{align}
 where $h(x)=\frac{\lambda}{2}d^2(x,\mathcal{B})$ and $\widetilde{g}$ is given by
 \begin{align}
 \label{fun_g_til_1}
x\in\R^{n}\mapsto \widetilde{g}(x)=\sup_{p\in\Delta_{N}}\sum_{i=1}^{N}p_{i}(\langle x,2u_{i}\rangle-\|u_{i}\|^{2})-\|x\|^{2}.
 \end{align}
 The problem~\eqref{app_w_1} is particular instance of \eqref{prob_w_conv} when $A=\Id$. Moreover, it is straightforward to prove that the objective function in \eqref{app_w_1} is coercive for all $\lambda>2$, hence \eqref{app_w_1} has solutions for all $\lambda>2$. Now, the function \eqref{fun_g_til_1} is a particular case of \eqref{fun_sup_weak} when $a_{i}=2u_{i}$, $b_{i}=-\|u_{i}\|^{2}$, $\sigma=1$, and $\C=\Delta_{N}$. Thus, we can use the algorithm in Proposition~\ref{p_prox_sup_w_aff} for computing the proximity operator of $\widetilde{g}$. Observe that the projection onto $\C=\Delta_{N}$ has an explicit form given in \cite{duchi2008efficient}. In addition,  $\nabla h(x)=\lambda(x-P_{\mathcal{B}}(x))$ is Lipschitz with constant $\lambda$. 
 
 On the other hand, since $V\subset\R^{n}$ is a vector subspace, then by \cite[Theorem~1.4]{rockafellar1997convex}, there exists $R\in\R^{m\times n}$ such that $V=\ker R$. Thus, since $\ran R$ is closed (since it is a vector subspace of finite dimension), then the projection onto $V$ is given by  $ P_{\ker R}=\Id-R^{\dagger}R$, where $R^{\dagger}$ is the generalized (or Moore-Penrose) inverse of $R$.

 Another equivalent formulation for the problem~\eqref{app_w_1} is the following
 \begin{align}
 \label{app_w_2}
\min_{\mathbf{x}=(x_{1},\ldots,x_{N})\in\R^{nN}} H(\mathbf{x})+g(\mathbf{x})\quad\text{s.t.}\quad\mathbf{x}\in V^{N}\cap\mathcal{D},
 \end{align}
 where $H\colon\R^{nN}\rightarrow\R$ is defined by $H(\mathbf{x})=h(x_{1})=\frac{\lambda}{2}d(x_{1},\mathcal{B})^{2}$, $\mathcal{D}=\{\mathbf{x}\in\R^{nN}:x_{1}=\cdots=x_{N}\}$ is the diagonal set, and $g$ is given by
 \begin{align}
\label{fun_g_1}
\mathbf{x}\in\R^{nN}\mapsto g(\mathbf{x})=\max_{1\leq i\leq N}-\|x_{i}-u_{i}\|^{2}.
 \end{align}
Note that $\mathbf{x}=(x,\ldots,x)$ is solution to \eqref{app_w_2} if and only if $x$ is solution to \eqref{app_w_1}. The problem~\eqref{app_w_2} is a particular case of \eqref{prob_w_conv} when $A=\Id$ and the closed vector subspace is $V^{N}\cap\mathcal{D}$. As mentioned before, the objective function in \eqref{app_w_2} is coercive in $\mathcal{D}$ for all $\lambda>2$, hence \eqref{app_w_2} has solutions for all $\lambda>2$. The proximity operator of \eqref{fun_g_1} has a closed form and is given by Proposition~\ref{p_prox_sup_w_quad} and Proposition~\ref{prop_prob_aux_w}.

The projection onto $V^{N}\cap\mathcal{D}$ can be calculated using the Dykstra's projection algorithm, which requires calculate the projection onto $V^{N}$ and onto $\mathcal{D}$. Recall that $V=\ker R$. By \cite[Proposition~29.3]{MR3616647}, we have that $P_{V^{N}}(\mathbf{x})=(P_{\ker R}(x_{i}))_{i=1}^{N}$ for all  $\mathbf{x}\in\R^{nN}$. On the other hand, the projection onto $\mathcal{D}$ is given by $P_{\mathcal{D}}(\mathbf{x})=(\frac{1}{N}\sum_{j=1}^{N}x_{j})_{i=1}^{N}$ for all $\mathbf{x}\in\R^{nN}$.

 \section{Numerical experiments}
 \label{sec_num}
 In this section, we use the Algorithm~\ref{alg_w_convex_sev} for solving the equivalent problems~\eqref{app_w_1} and \eqref{app_w_2} associated to the max dispersion problem. We consider $\mathcal{B}=B(0,r)$ with $r>0$ and $V=\ker R$ for some $R\in\R^{m\times n}$. Observe that the functions $\widetilde{g}$ and $g$ defined in \eqref{fun_g_til_1} and \eqref{fun_g_1} respectively, are $\rho$-weakly convex with $\rho=2$. 
 In the context of problem~\eqref{app_w_1}, we have that $L_{\nabla h}=\lambda$ and $A=\Id$. Then, $L_{k}$ defined in \eqref{cons_L_k} is $L_{k}=\lambda+\dfrac{1}{\mu_{k}}=\dfrac{1+\lambda\mu_{k}}{\mu_{k}}$ and $\gamma_{k}=1/L_{k}=\dfrac{\mu_{k}}{1+\lambda\mu_{k}}$. Hence, for all $k\geq 1$ (where $F_{k}$ is defined in \eqref{prob_w_conv_reg}), we have
 \begin{align*}
x_{k}-\gamma_{k}\nabla F_{k}(x_{k})=\dfrac{1}{1+\lambda\mu_{k}}\left(\lambda\mu_{k}P_{\mathcal{B}}(x_{k})+\prox_{\mu_{k}\widetilde{g}}(x_{k})\right).
 \end{align*}
 Thus, the Algorithm~\ref{alg_w_convex_sev} for solving problem~\eqref{app_w_1} reduces to
 \begin{algorithm}[H]
\caption{Projected variable smoothing for \eqref{app_w_1}}\label{alg_max_disp_1}
\begin{algorithmic}
\Require $x_{1}\in \ker R$, $\alpha\in\left]0,1\right[$, and $C>0$ such that $4C\leq 1$.
\For{$k=1,2,\ldots$}
\State $\mu_{k}\gets Ck^{-\alpha}$ and  $x_{k+1}\gets P_{\ker R}\left(\dfrac{1}{1+\lambda\mu_{k}}(\lambda\mu_{k}P_{\mathcal{B}}(x_{k})+\prox_{\mu_{k}\widetilde{g}}(x_{k})\right))$.
\EndFor
\end{algorithmic}
\end{algorithm}
The proximity operator of $\widetilde{g}$ in \eqref{fun_g_til_1} can be calculated by using Proposition~\ref{p_prox_sup_w_aff}, while the projection onto $\mathcal{B}=B(0,r)$ is well-known.  On the other hand, in the context of problem~\eqref{app_w_2}, $\nabla H(\mathbf{x})=(\lambda(x_{1}-P_{\mathcal{B}}(x_{1})),0,\ldots,0)$, which implies that $\nabla H$ is Lipschitz with constant $L_{\nabla H}=\lambda$. Then, as in the case of problem~\eqref{app_w_1}, $\gamma_{k}=\frac{\mu_{k}}{1+\lambda\mu_{k}}$. Hence, for all $k\geq 1$,
\begin{align*}
\mathbf{x}^{k}-\gamma_{k}\nabla F_{k}(\mathbf{x}^{k})=\frac{1}{1+\lambda\mu_{k}}\left(\mu_{k}(\lambda \mathbf{x}^{k}-\nabla H(\mathbf{x}^{k}))+\prox_{\mu_{k}g}(\mathbf{x}^{k})\right).
\end{align*}
Therefore, the Algorithm~\ref{alg_w_convex_sev} for solving problem~\eqref{app_w_2} reduces to
 \begin{algorithm}[H]
\caption{Projected variable smoothing for \eqref{app_w_2}}\label{alg_max_disp_2}
\begin{algorithmic}
\Require $\mathbf{x}^{1}\in V:=(\ker R)^{N}\cap\mathcal{D}$, $\alpha\in\left]0,1\right[$, and $C>0$ such that $4C\leq 1$.
\For{$k=1,2,\ldots$}
\State Set $\mu_{k}\gets Ck^{-\alpha}$ and $\mathbf{z}^{k}\gets (\lambda(x_{1}^{k}-P_{\mathcal{B}}(x_{1}^{k})),0,\ldots,0)$
\State $\mathbf{x}^{k+1}\gets P_{V}(\dfrac{1}{1+\lambda\mu_{k}}\left(\mu_{k}(\lambda \mathbf{x}^{k}-\mathbf{z}^{k})+\prox_{\mu_{k}g}(\mathbf{x}^{k})\right))$.
\EndFor
\end{algorithmic}
\end{algorithm}
The proximity operator of $g$ in \eqref{fun_g_1} has a closed-form and is given by Propositions~\ref{p_prox_sup_w_quad} and \ref{prop_prob_aux_w}. For the numerical results, we consider $n=3$, $N=10$, $\alpha=1/3$, $C=1/4$, $r=1$, and $R=\begin{pmatrix} 1& 1 & 1
\end{pmatrix}$. That is, $\ker R=\{(x,y,z)\in\R^{3}:x+y+z=0\}$. We choose to stop every algorithm when the norm of the difference between two consecutive iterations is less than $10^{-5}$. On the other hand, for every $i\in\{1,\ldots,N\}$, we consider vectors $u_{i}$ of the form $2\texttt{rand}(n,1)$, where $\texttt{rand}(n,1)$ stands a random vector with components in $\left[0,1\right]$. Since $\mathcal{B}=B(0,r)$, then the objective function in \eqref{app_w_1} and \eqref{app_w_2} is
\begin{equation*}
F_{\lambda}(x)=\frac{\lambda}{2}(\max\{\|x\|-r,0\})^{2}+\max_{1\leq i\leq N}-\|x-u_{i}\|^{2}.
\end{equation*}
In Figure~\ref{fig_1-2}, we illustrate the value of the objective function $F_{\lambda}$ with respect to the time (in seconds) for the algorithms \ref{alg_max_disp_1} and \ref{alg_max_disp_2}. 
\begin{figure}[htbp]
\begin{center}
\includegraphics[scale=1.06]{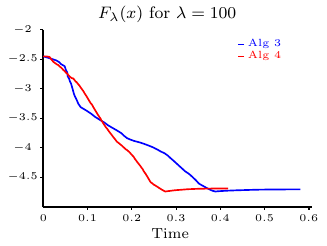}
\includegraphics[scale=1.06]{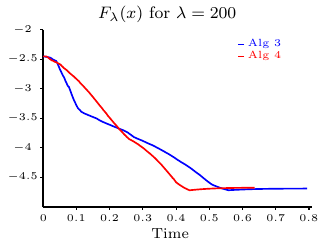}
\end{center}
\caption{Performance of algorithms  \ref{alg_max_disp_1} and \ref{alg_max_disp_2} for  $\lambda=100$ (left) and $\lambda=200$  (right).}
\label{fig_1-2}
\end{figure}

We observe that for both cases of $\lambda$, the Algorithm~\ref{alg_max_disp_2} converges in less time than Algorithm~\ref{alg_max_disp_1}. This is because Algorithm~\ref{alg_max_disp_1} requires computing the proximity operator of $\widetilde{g}$, which takes more time than calculating the proximity of $g$. However, the values of the objective function in the last iteration in algorithms \ref{alg_max_disp_1} and \ref{alg_max_disp_2} are $-4.7043$ and $-4.6885$ respectively for $\lambda=100$, and of $-4.6901$ and $-4.6763$ respectively for $\lambda=200$. Therefore, the Algorithm~\ref{alg_max_disp_1} provides a better approximation to the solution of the max dispersion problem.
 \section{Conclusions}
 \label{sec_conc}
 In this paper, we consider the problem of minimizing the sum of a smooth function and the composition of a weakly convex function with a bounded linear operator over a closed vector subspace, generalizing the framework studied in \cite{bohm2021variable}, which considers the case where the vector subspace is the entire space. We propose a variable smoothing algorithm that includes projection onto the vector subspace and we prove a complexity of $\mathcal{O}(\epsilon^{-3})$ to achieve an $\epsilon$-approximate solution. In both the literature and the proposed algorithm, it is typically assumed that the weakly convex function is Lipschitz in order to obtain an approximate solution. However, we show that if the sequence generated by the proposed algorithm is bounded, this assumption is not necessary.

 The proposed algorithm requires computing the proximity operator of a weakly convex function. In this context, we compute the proximity operator of a supremum of weakly convex functions in two important cases. In one case, the proximity operator has a closed formula,  and in the other, we provide an algorithm that converges to the proximity operator. The supremum function appears in the so-called distributionally robust optimization problem \cite{sun2021robust,MR4361979}. Another problem where our algorithm can be applied, which also involves a supremum function, is the max-dispersion problem (or max-min location problem) over the intersection of a  subspace and a compact convex set.
 
 \begin{acknowledgements}
The authors were supported by ANID Chile under grants Fondecyt Regular  N$^{\circ}$ 1240120 (P. P\'erez-Aros and E. Vilches), Fondecyt Regular N$^{\circ}$ 1220886 (P. P\'erez-Aros and E. Vilches), Fondecyt Regular N$^{\circ}$ 1240335 (P. P\'erez-Aros),  Proyecto de Exploraci\'on 13220097 (P. P\'erez-Aros and E. Vilches),  CMM BASAL funds for Center of Excellence FB210005 (P. P\'erez-Aros and E. Vilches), Project ECOS230027 (P. P\'erez-Aros and E. Vilches), MATH-AMSUD 23-MATH-17 (P. P\'erez-Aros and E. Vilches). Beca de Doctorado Nacional 21210951 (S. L\'opez-Rivera).
\end{acknowledgements}

\appendix

\section{Appendix}
\vspace{-3mm}
\subsection{Penalty Method}\label{penalty_method}
\vspace{-3mm}
This section is based in \cite[Section~21.1]{luenberger1984linear}. Consider the following optimization problem
  \begin{equation}
  \label{const_prob}
    \min_{x\in V\cap \mathcal{B}}f(x),
  \end{equation}
  where $f\colon\R^{n}\rightarrow\R$ is lower semicontinuous, $V$ is a closed vector subspace of $\R^{n}$, and $\mathcal{B}$ is a compact convex subset of $\R^{n}$. Note that \eqref{const_prob} has solutions since $f$ is lower semicontinuous and $V\cap \mathcal{B}$ is compact. The idea of the penalty function method is replace problem~\eqref{const_prob} by a problem of the form $\min_{x\in V}f(x)+\lambda P(x)$, where $\lambda>0$ and $P\colon\R^{n}\rightarrow\R$ is a lower semicontinuous function satisfying that $P(x)\geq 0$ for all $x\in\R^{n}$ and $P(x)=0$ if and only if $x\in \mathcal{B}$. In order to solve problem~\eqref{const_prob} by the penalty function method, let $\lambda_{k}\to +\infty$ be a sequence such that $\lambda_{k}>0$ and $\lambda_{k+1}>\lambda_{k}$ for all $k\in\mathbb{N}$. Consider $q(\lambda,x):=f(x)+\lambda P(x)$ and assume that for all $k\in\mathbb{N}$ the problem $\min_{x\in V} q(\lambda_{k},x)$ has a solution $x_{k}\in V$.
  \begin{lemma}
\label{lema_penalty}
For every $k\in\mathbb{N}$, we have
$$
q(\lambda_{k},x_{k})\leq q(\lambda_{k+1},x_{k+1}),\,  P(x_{k})\geq P(x_{k+1}) \textrm{ and } f(x_{k})\leq f(x_{k+1}).
$$
  \end{lemma}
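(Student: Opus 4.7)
The plan is to exploit the two optimality inequalities coming from the fact that $x_k$ and $x_{k+1}$ minimize $q(\lambda_k,\cdot)$ and $q(\lambda_{k+1},\cdot)$ respectively over $V$, and then combine them algebraically. All three assertions will follow from these two inequalities together with the hypotheses $\lambda_{k+1}>\lambda_k>0$ and $P\ge 0$.

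First I would write down the optimality of $x_k$ tested against $x_{k+1}\in V$ and the optimality of $x_{k+1}$ tested against $x_k\in V$:
\begin{align*}
f(x_k)+\lambda_k P(x_k) &\le f(x_{k+1})+\lambda_k P(x_{k+1}),\\
f(x_{k+1})+\lambda_{k+1} P(x_{k+1}) &\le f(x_k)+\lambda_{k+1} P(x_k).
\end{align*}
Adding these two inequalities cancels the $f$-values and yields $(\lambda_{k+1}-\lambda_k)P(x_{k+1})\le(\lambda_{k+1}-\lambda_k)P(x_k)$; since $\lambda_{k+1}-\lambda_k>0$, this gives the monotonicity $P(x_{k+1})\le P(x_k)$.

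Next I would plug $P(x_{k+1})\le P(x_k)$ back into the first of the two optimality inequalities: $f(x_k)\le f(x_{k+1})+\lambda_k\bigl(P(x_{k+1})-P(x_k)\bigr)\le f(x_{k+1})$, since $\lambda_k>0$. This gives the third claim $f(x_k)\le f(x_{k+1})$. Finally, for the first claim, starting again from the optimality of $x_k$ and using $P(x_{k+1})\ge 0$ together with $\lambda_k<\lambda_{k+1}$:
\begin{equation*}
q(\lambda_k,x_k)\le f(x_{k+1})+\lambda_k P(x_{k+1})\le f(x_{k+1})+\lambda_{k+1} P(x_{k+1})=q(\lambda_{k+1},x_{k+1}).
\end{equation*}

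There is no real obstacle here; the proof is entirely a two-line comparison of minimizers. The only point deserving attention is the need to verify that the test points used in the optimality inequalities belong to the feasible set $V$, which is immediate since both $x_k$ and $x_{k+1}$ lie in $V$ by hypothesis on the sequence.
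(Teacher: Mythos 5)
Your proof is correct and follows essentially the same route as the paper: both rest on the two optimality inequalities of $x_k$ and $x_{k+1}$ tested against each other, combined with $0<\lambda_k<\lambda_{k+1}$ and $P\geq 0$. The only cosmetic difference is that you add the two inequalities to isolate the $P$-monotonicity, whereas the paper chains them; the algebra is equivalent.
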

  \begin{proof}
Since $\lambda_k$ is increasing, we have
\begin{align*}
q(\lambda_{k+1},x_{k+1})\geq f(x_{k+1})+\lambda_{k}P(x_{k+1})\geq f(x_{k})+\lambda_{k}P(x_{k})=q(\lambda_{k},x_{k}),
\end{align*}
thereby proving the first inequality. Next, observe that 
$$
f(x_{k})+\lambda_{k}P(x_{k})\leq f(x_{k+1})+\lambda_{k}P(x_{k+1}) \leq f(x_k)+\lambda_{k+1}P(x_k)+(\lambda_k-\lambda_{k+1})P(x_{k+1}),
$$
which implies the second inequality. Finally, by using the second inequality, we get that 
$$
f(x_{k})+\lambda_{k}P(x_{k})\leq f(x_{k+1})+\lambda_{k}P(x_{k+1})\leq f(x_{k+1})+\lambda_{k}P(x_{k}),$$ which implies the last inequality. \qed
  \end{proof}
\begin{lemma}
\label{lema_penalty_2} If $x^{*}$ solves problem~\eqref{const_prob}, then  $f(x^{*})\geq q(\lambda_{k},x_{k})\geq f(x_{k})$ for all $k\in \mathbb{N}$.
\end{lemma}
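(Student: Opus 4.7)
The plan is straightforward since both inequalities follow directly from the defining properties of $x_k$, $x^*$, and $P$. There is essentially no obstacle here; the lemma is a bookkeeping result that establishes monotone two-sided bounds trapping the penalty-subproblem values between $f(x_k)$ and $f(x^*)$.

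For the left inequality $f(x^*) \geq q(\lambda_k, x_k)$, I would argue as follows. Since $x^* \in V \cap \mathcal{B}$, in particular $x^* \in V$, so $x^*$ is feasible for the unconstrained-on-$V$ problem that defines $x_k$. Moreover, $x^* \in \mathcal{B}$ forces $P(x^*) = 0$ by the defining property of the penalty function. Hence $q(\lambda_k, x^*) = f(x^*) + \lambda_k P(x^*) = f(x^*)$, and the minimizing property of $x_k$ on $V$ gives
\begin{equation*}
q(\lambda_k, x_k) \leq q(\lambda_k, x^*) = f(x^*).
\end{equation*}

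For the right inequality $q(\lambda_k, x_k) \geq f(x_k)$, I would simply invoke $\lambda_k > 0$ and $P(x_k) \geq 0$, which yields $q(\lambda_k, x_k) = f(x_k) + \lambda_k P(x_k) \geq f(x_k)$. Combining the two displayed inequalities concludes the proof. The main (minor) point to be careful about is making explicit that the feasibility of $x^*$ for the subproblem $\min_{x \in V} q(\lambda_k, x)$ relies on the inclusion $V \cap \mathcal{B} \subset V$, and that the vanishing of $P$ on $\mathcal{B}$ is used precisely here; no compactness, convexity, or lower semicontinuity argument is needed.
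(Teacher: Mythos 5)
Your proof is correct and follows exactly the same one-line argument as the paper: $f(x^*)=q(\lambda_k,x^*)\geq q(\lambda_k,x_k)\geq f(x_k)$, using $P(x^*)=0$, the minimality of $x_k$ over $V$ (with $x^*\in V$), and $\lambda_k P(x_k)\geq 0$. You merely spell out the feasibility bookkeeping that the paper leaves implicit.
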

\begin{proof} $f(x^{*})=f(x^{*})+\lambda_{k}P(x^{*})\geq f(x_{k})+\lambda_{k}P(x_{k})\geq f(x_{k})$.  \qed
\end{proof}
\begin{theorem}
A cluster point of a sequence generated by the penalty method solves \eqref{const_prob}.
\end{theorem}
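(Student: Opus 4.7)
The plan is to take a cluster point $\bar{x}$ of $(x_k)$, say $x_{k_j}\to\bar{x}$, and establish (i) $\bar{x}\in V\cap\mathcal{B}$ and (ii) $f(\bar{x})=f(x^{*})$, where $x^{*}$ is any solution of \eqref{const_prob}. Feasibility of $\bar{x}$ with respect to $V$ is immediate since $V$ is closed and $(x_{k_j})\subset V$.

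For the membership $\bar{x}\in\mathcal{B}$, I would argue that $P(\bar{x})=0$. The key observation is that Lemma~\ref{lema_penalty_2} gives $f(x_k)+\lambda_k P(x_k)=q(\lambda_k,x_k)\leq f(x^{*})$, so $\lambda_k P(x_k)\leq f(x^{*})-f(x_k)$. By Lemma~\ref{lema_penalty}, $(f(x_k))$ is nondecreasing, hence $f(x_k)\geq f(x_1)$ for all $k$, which yields the uniform bound $\lambda_k P(x_k)\leq f(x^{*})-f(x_1)$. Combined with $\lambda_k\to+\infty$ and $P\geq 0$, this forces $P(x_k)\to 0$. Lower semicontinuity of $P$ then gives $0\leq P(\bar{x})\leq \liminf_j P(x_{k_j})=0$, so $P(\bar{x})=0$ and $\bar{x}\in\mathcal{B}$.

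For optimality, lower semicontinuity of $f$ yields $f(\bar{x})\leq \liminf_j f(x_{k_j})$, and Lemma~\ref{lema_penalty_2} gives $f(x_{k_j})\leq f(x^{*})$ for every $j$, so $f(\bar{x})\leq f(x^{*})$. Since $\bar{x}\in V\cap\mathcal{B}$ is feasible and $x^{*}$ is a minimizer, the reverse inequality $f(\bar{x})\geq f(x^{*})$ holds automatically, giving $f(\bar{x})=f(x^{*})$.

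The only subtle step is obtaining a uniform upper bound on $\lambda_k P(x_k)$; this is resolved cleanly by chaining Lemma~\ref{lema_penalty_2} with the monotonicity of $(f(x_k))$ from Lemma~\ref{lema_penalty}, so no deeper argument is required. Note that the existence of a cluster point is not proved here but is the standing assumption; in practice it is ensured, for instance, by the compactness of $\mathcal{B}$ together with a coercivity-type condition ensuring that the iterates $x_k$ eventually lie in a bounded set.
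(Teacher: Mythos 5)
Your proof is correct and follows essentially the same route as the paper: show $\lambda_k P(x_k)$ stays bounded so that $P(x_k)\to 0$, then use lower semicontinuity of $P$ and $f$ together with Lemma~\ref{lema_penalty_2} to conclude feasibility and optimality of $\bar{x}$. The only (harmless) difference is that you bound $\lambda_k P(x_k)$ directly by $f(x^{*})-f(x_1)$ via the monotonicity of $(f(x_k))$, whereas the paper first establishes convergence of the monotone sequences $q(\lambda_k,x_k)$, $f(x_k)$, and $P(x_k)$ before drawing the same conclusion.
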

\begin{proof}
Let $(x_{k})$ generated by the penalty method and $\bar{x}$ a cluster point. Let $f^{*}$ the value of \eqref{const_prob}. From Lemmas \ref{lema_penalty} and \ref{lema_penalty_2}, it follows that $(f(x_{k}))$ is increasing and bounded above by $f^{*}$. Hence, $(f(x_{k}))$ converges and $f(\overline{x})\leq \lim f(x_{k})$.  Similarly, $q^{*}:=\lim q(\lambda_{k},x_{k})$ and $\lim P(x_k)$ exist. Therefore,  $0\leq \lim\lambda_{k}P(x_{k})=\lim q(\lambda_{k},x_{k})-\lim f(x_{k})\leq q^{*}-f(\overline{x})$, which, since $\lambda_{k}\rightarrow\infty$ and $P(x_{k})\geq 0$, implies that $\lim P(x_k)=0$. Moreover, $0\leq P(\bar{x})\leq \lim P(x_k)=0$, thereby proving that $\bar{x}\in \mathcal{B}$. Finally, from Lemma~\ref{lema_penalty_2}, we conclude that $f(\overline{x})\leq\lim_{k\rightarrow\infty}f(x_{k})\leq f^{*}$, which ends the proof. \qed
\end{proof}
\vspace{-3mm}
\subsection{ Key inequalities}
\vspace{-3mm}
\begin{lemma}\label{claim_des}
Assume that $\alpha \in ]0,1[$. Then, for all $K\in \mathbb{N}$
$$
(K+1)^{1-\alpha}-1\geq (2^{1-\alpha}-1)K^{1-\alpha}.
$$
\end{lemma}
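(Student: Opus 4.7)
The plan is to set $\beta := 1-\alpha \in (0,1)$ and rewrite the desired inequality as
$$
(K+1)^{\beta} - 1 \geq (2^{\beta}-1)\, K^{\beta}.
$$
The case $K=1$ gives equality, and for $K \geq 1$ I would prove the inequality by a monotonicity argument on a continuous extension: define $\varphi \colon [1,\infty) \to \mathbb{R}$ by
$$
\varphi(t) := (t+1)^{\beta} - 1 - (2^{\beta}-1)\, t^{\beta},
$$
observe that $\varphi(1)=0$, and try to show $\varphi'(t) \geq 0$ on $[1,\infty)$.

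Differentiating, $\varphi'(t) = \beta\bigl[(t+1)^{\beta-1} - (2^{\beta}-1)\, t^{\beta-1}\bigr]$, so the claim $\varphi'(t)\geq 0$ is equivalent to $\bigl(1+\tfrac{1}{t}\bigr)^{\beta-1} \geq 2^{\beta}-1$. The key observation is that since $\beta-1<0$ and $t\mapsto 1+1/t$ is decreasing on $[1,\infty)$, the map $t\mapsto (1+1/t)^{\beta-1}$ is nondecreasing, so its infimum over $[1,\infty)$ is attained at $t=1$ with value $2^{\beta-1}$. Hence it suffices to verify
$$
2^{\beta-1} \geq 2^{\beta}-1,
$$
which rearranges to $1\geq 2^{\beta-1}$, and this is immediate from $\beta<1$.

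Once $\varphi' \geq 0$ is established, combining with $\varphi(1)=0$ yields $\varphi(K)\geq 0$ for every $K\geq 1$, which is the desired inequality. The case $K=0$ (if $0\in\mathbb{N}$) reduces both sides to $0$. I do not anticipate a significant obstacle: the entire argument is a one-variable calculus check, and the only point requiring a moment of care is the sign bookkeeping when raising the decreasing quantity $1+1/t$ to the negative exponent $\beta-1$, so that the worst case lies at $t=1$ rather than at infinity.
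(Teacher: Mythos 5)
Your proof is correct and follows essentially the same route as the paper: both define the auxiliary function $\varphi(t)=(t+1)^{1-\alpha}-1-(2^{1-\alpha}-1)t^{1-\alpha}$, note $\varphi(1)=0$, and show $\varphi'\geq 0$ on $[1,\infty)$, with the verification of $\varphi'\geq 0$ reducing in both cases to the inequality $2^{-\alpha}\geq 2^{1-\alpha}-1$. Your endpoint/monotonicity argument for $(1+1/t)^{\beta-1}$ is just a cleaner packaging of the paper's algebraic rearrangement.
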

\begin{proof}
Let us define $\varphi(x)=(x+1)^{1-\alpha}-1- \theta x^{1-\alpha}$ for all $x\geq 1$, where $\theta=2^{1-\alpha}-1$. Since $\alpha\in\left]0,1\right[$, $0<\theta<1$. Note that $\varphi(1)=0$. Let $x\geq 1$. Then, $\varphi'(x)=(1-\alpha)(x+1)^{-\alpha}-(1-\alpha)\theta x^{-\alpha}$. We will see that $\varphi'(x)\geq 0$.  Now, we observe that $\theta<2^{-\alpha}$ if and only if $\frac{\theta^{1/\alpha}}{1-\theta^{1/\alpha}}<1$. Therefore, $x\geq 1>\frac{\theta^{1/\alpha}}{1-\theta^{1/\alpha}}$. Now, we have that $x\geq \frac{\theta^{1/\alpha}}{1-\theta^{1/\alpha}}$ if and only if  $(x+1)^{-\alpha}\geq \theta x^{-\alpha}$.  Whence $\varphi'(x)\geq 0$. It follows that $\varphi(x)\geq \varphi(1)=0$ for all $x\geq 1$, which implies the result. \qed
\end{proof}

\begin{lemma}\label{claim_N}
Let $\alpha\in ]0,1[$ and $N\in \mathbb{N}$ such that $\theta:=(1+1/N)^{1-\alpha}-1<1/2$. Then, for all $K\geq N$
$$
(K+1)^{1-\alpha}-N^{1-\alpha}\geq \theta K^{1-\alpha}.
$$
\end{lemma}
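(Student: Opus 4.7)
The plan is to adapt the monotonicity argument used in Lemma \ref{claim_des}. Define the auxiliary function
$$
\varphi(x) := (x+1)^{1-\alpha} - N^{1-\alpha} - \theta\, x^{1-\alpha}, \quad x \geq N,
$$
and show $\varphi(x) \geq 0$ on $[N, +\infty)$; evaluating at $x = K \geq N$ then yields the claim.

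Two ingredients are needed. First, $\varphi(N) = 0$: this is precisely how $\theta$ was chosen, since $\theta\, N^{1-\alpha} = \bigl((N+1)/N\bigr)^{1-\alpha} N^{1-\alpha} - N^{1-\alpha} = (N+1)^{1-\alpha} - N^{1-\alpha}$, so the three terms in $\varphi(N)$ telescope. Second, $\varphi$ is nondecreasing on $[N, +\infty)$. Differentiating gives $\varphi'(x) = (1-\alpha)\bigl[(x+1)^{-\alpha} - \theta\, x^{-\alpha}\bigr]$, so it suffices to verify $\bigl(x/(x+1)\bigr)^{\alpha} \geq \theta$ for all $x \geq N$. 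The left-hand side is increasing in $x$, so it is enough to check it at $x = N$. Writing $t := 1 + 1/N > 1$, this reduces to $t^{-\alpha} \geq t^{1-\alpha} - 1$, i.e., after multiplying by $t^{\alpha}$, to $t^{\alpha} \geq t - 1 = 1/N$. Since $\alpha \in (0,1)$ and $t > 1$, we have $t^{\alpha} \geq 1 \geq 1/N$ for every $N \in \mathbb{N}$, which closes the argument.

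There is essentially no obstacle here: the proof is a direct generalization of Lemma \ref{claim_des} (which corresponds to $N = 1$ and a slightly different $\theta$), and the key observation is that $\theta = (1+1/N)^{1-\alpha}-1$ is precisely the value that forces the boundary identity $\varphi(N) = 0$. The hypothesis $\theta < 1/2$ is not actually used in establishing this lemma — it is imposed because it is needed in the application inside the theorem of Section~\ref{sec_alg} — so I would simply carry it along without invoking it in the argument.
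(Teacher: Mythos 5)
Your proof is correct and follows the same skeleton as the paper's: the same auxiliary function $\varphi(x)=(x+1)^{1-\alpha}-N^{1-\alpha}-\theta x^{1-\alpha}$, the same boundary identity $\varphi(N)=0$ forced by the definition of $\theta$, and the same monotonicity argument via $\varphi'\geq 0$. Where you diverge is in how the derivative inequality $(x+1)^{-\alpha}\geq\theta x^{-\alpha}$ is verified. The paper invokes the hypothesis $\theta<1/2$: it first checks that $1/2<\bigl(N/(N+1)\bigr)^{\alpha}$ (equivalently $(2^{1/\alpha}-1)N>1$), chains this with $\theta<1/2$ to get $\theta^{1/\alpha}\leq N/(N+1)$, and then rewrites this as $x\geq N\geq \theta^{1/\alpha}/(1-\theta^{1/\alpha})$, which is equivalent to the desired sign condition. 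You instead check the condition directly at the worst point $x=N$ (using that $x\mapsto (x/(x+1))^{\alpha}$ is increasing), where it reduces to $t^{\alpha}\geq t-1$ for $t=1+1/N\in\left]1,2\right]$ — trivially true since $t^{\alpha}>1\geq 1/N$. This is slightly more elementary and, more to the point, shows that the hypothesis $\theta<1/2$ is not needed for the lemma itself; as you correctly note, that hypothesis is only used downstream, in the theorem of Section~\ref{sec_alg}, where the paper requires $\bigl(1+1/N\bigr)^{1-\alpha}-1<1/2$ to define the constant $\widetilde{C}$ and select $N$. Both arguments are valid; yours yields a marginally stronger statement.
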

\begin{proof}
Define  $\varphi(x):=(x+1)^{1-\alpha}-N^{1-\alpha}-\theta x^{1-\alpha}$. By definition of $\theta$, we deduce that $\varphi(N)=0$. Let $x\geq N$. Then, $\varphi'(x)=(1-\alpha)(x+1)^{-\alpha}-(1-\alpha)\theta x^{-\alpha}$. 
Since $\alpha\in\left]0,1\right[$, we have that $(2^{1/\alpha}-1)N>1$, which is equivalent to $1/2<(\frac{N}{N+1})^{\alpha}$. Hence, $\theta<1/2<(\frac{N}{N+1})^{\alpha}$. Then, $\theta^{1/\alpha}\leq \frac{N}{N+1}$, which implies that $x\geq N\geq \frac{\theta^{1/\alpha}}{1-\theta^{1/\alpha}}$. Note that $x\geq \frac{\theta^{1/\alpha}}{1-\theta^{1/\alpha}}$ if and only if  $(x+1)^{-\alpha}\geq \theta x^{-\alpha}$. Hence $\varphi'(x)\geq 0$ and $\varphi(x)\geq \varphi(N)=0$ for all $x\geq N$.  \qed
\end{proof}

\begin{proposition}
\label{prop_1.1}
Let $g\colon \H\rightarrow \R\cup\{+\infty\}$ be a proper, l.s.c., and $\rho$-weakly convex function. 
 Then there exists $\mu_{0}<\min\{1,\frac{1}{2\rho}\}$ such that for every 
 $\mu\in\left]0,\mu_{0}\right[$, $x\in\H$, and $x_{0}\in\dom g$ we have
$$
\|\prox_{\mu g}(x)\|^{2}\leq 4(M+5\|x\|^{2}),
$$
where $M=\max\{0,g(x_{0})\}+2\|x_{0}\|^{2}+1/2$.
	\end{proposition}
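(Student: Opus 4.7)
The plan is to bound $\|p\|^2$, where $p := \prox_{\mu g}(x)$, by combining the defining optimality inequality of the proximal mapping with a quadratic lower bound on $g$ obtained from a global affine minorant of the convex function $h := g + \tfrac{\rho}{2}\|\cdot\|^2 \in \Gamma_0(\H)$. First, from $p = \prox_{\mu g}(x)$ and $x_0 \in \dom g$ one has $g(p) + \tfrac{1}{2\mu}\|p-x\|^2 \le g(x_0) + \tfrac{1}{2\mu}\|x_0-x\|^2$. Using the elementary bounds $\|p\|^2 \le 2\|p-x\|^2 + 2\|x\|^2$ and $\|x_0-x\|^2 \le 2\|x_0\|^2 + 2\|x\|^2$, I would derive
$$\|p\|^2 \le 4\mu\, g(x_0) - 4\mu\, g(p) + 4\|x_0\|^2 + 6\|x\|^2.$$
The $\|x_0\|^2$ and $\|x\|^2$ terms already match the target form; the remaining task is to absorb the unknown term $-g(p)$.

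Because $h \in \Gamma_0(\H)$, its Fenchel conjugate is proper, so $h$ admits an affine minorant $h(y) \ge \langle v,y\rangle - \beta$ for some $v \in \H$ and $\beta \in \R$, where $\beta$ can be taken nonnegative. This yields $g(p) \ge \langle v,p\rangle - \beta - \tfrac{\rho}{2}\|p\|^2$, and Young's inequality $\|v\|\|p\| \le \tfrac{1}{2}\|v\|^2 + \tfrac{1}{2}\|p\|^2$ then gives $-g(p) \le \tfrac{\rho+1}{2}\|p\|^2 + \tfrac{1}{2}\|v\|^2 + \beta$. Substituting into the previous inequality and grouping the $\|p\|^2$ terms on the left produces
$$\bigl(1 - 2\mu(\rho+1)\bigr)\|p\|^2 \le 4\mu\, g(x_0) + 2\mu\|v\|^2 + 4\mu\beta + 4\|x_0\|^2 + 6\|x\|^2.$$

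To conclude, I would fix $\mu_0 > 0$ small enough that simultaneously (i) $\mu_0 \le \tfrac{1}{4(\rho+1)}$, so that $1 - 2\mu(\rho+1) \ge 1/2$ and automatically $\mu_0 < \min\{1, 1/(2\rho)\}$; (ii) $\mu_0 \le 1/2$, so that $8\mu\, g(x_0) \le 4\max\{0, g(x_0)\}$ regardless of the sign of $g(x_0)$; and (iii) $\mu_0(\|v\|^2 + 2\beta) \le 1/4$, so that $4\mu\|v\|^2 + 8\mu\beta \le 2$. Dividing the last display by the coefficient $1 - 2\mu(\rho+1) \ge 1/2$ and then applying (ii)-(iii) leaves
$$\|p\|^2 \le 4\max\{0, g(x_0)\} + 2 + 8\|x_0\|^2 + 12\|x\|^2 \le 4M + 20\|x\|^2 = 4(M + 5\|x\|^2),$$
which is the claimed inequality. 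The main technical obstacle is obtaining a uniform lower bound on $g(p)$, since $p$ depends on the unknown $x$; $\Gamma_0$-regularity of $h$ is precisely what supplies it through the affine minorant. The constants $\|v\|$ and $\beta$ depend only on $g$ (not on $x$ or $x_0$), so the threshold $\mu_0$ is uniform in these variables.
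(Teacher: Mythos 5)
Your proof is correct and follows essentially the same route as the paper's: both use the proximal optimality inequality against $x_{0}$ together with a quadratic lower bound on $g$ obtained from an affine minorant of $g+\tfrac{\rho}{2}\|\cdot\|^{2}$, and then absorb the resulting $\|p\|^{2}$ terms for $\mu$ small enough. The only differences are cosmetic (the paper packages the minorant as a separate bound $g(y)\geq -\tfrac{\theta}{2}(1+\|y\|^{2})$ and uses a parametrized Young inequality, while you expand $\|p\|^{2}\leq 2\|p-x\|^{2}+2\|x\|^{2}$ first), and your constants land within the stated bound.
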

	\begin{proof}
    We claim that there exists $\theta>0$ such that for all $x\in\H$,
\begin{align}
\label{claim_1_w_convex}
g(x)\geq -\dfrac{\theta}{2}(1+\|x\|^{2}).
\end{align}
Since $g+\dfrac{\rho}{2}\|\cdot\|^{2}$  has an affine minorant, there exist $x^{*}\in\H$ and $\beta\in\R$ such that 
\begin{align*}
g(x)+\dfrac{\rho}{2}\|x\|^{2}\geq \langle x^{*},x\rangle+\beta \geq -\frac{\|x^{*}\|^{2}}{2}-\frac{\|x\|^{2}}{2}+\beta \textrm{ for all } x\in \H.
\end{align*}
Then, $g(x)\geq -\frac{1}{2}(\|x^{*}\|^{2}-2\beta)-\frac{1}{2}(1+\rho)\|x\|^{2}\geq -\frac{1}{2}\max\{\|x^{*}\|^{2}-2\beta,1+\rho\}(1+\|x\|^{2})$. Therefore, $g$ satisfies \eqref{claim_1_w_convex} with $\theta=\max\{\|x^{*}\|^{2}-2\beta,1+\rho\}>0$. Let us define $\mu_{0}=\min\{\frac{1}{2\theta},\frac{1}{1+\theta}\}$.

Let $\mu\in ]0,\mu_0 [$, let $x\in\H$, let $y=\prox_{\mu g}(x)$, and$x_{0}\in\dom g\neq\emptyset$. By \eqref{claim_1_w_convex}, we have that
		\begin{align*}
			-\frac{\theta}{2}-\frac{\theta}{2}\|y\|^{2}+\frac{1}{2\mu}\|y-x\|^{2}&\leq g(y)+\frac{1}{2\mu}\|y-x\|^{2}
			\leq g(x_{0})+\frac{1}{2\mu}\|x_{0}-x\|^{2}.
		\end{align*}
		Then,
		\begin{align*}
			-\frac{\theta}{2}-\frac{\theta}{2}\|y\|^{2}+\frac{1}{2\mu}\|y\|^{2}-\frac{1}{\mu}\langle y,x \rangle\leq g(x_{0})+\frac{1}{2\mu}\|x_{0}\|^{2}-\frac{1}{\mu}\langle x_{0},x\rangle
			\leq g(x_{0})+\frac{1}{\mu}\|x_{0}\|^{2}+\frac{1}{2\mu}\|x\|^{2}.
		\end{align*}
		Now, from $ab\leq \frac{\alpha}{2}a^{2}+\frac{1}{2\alpha}b^{2}$ with $a=\|y\|$, $b=\|x\|$ and $\alpha=\frac{1-\mu\theta}{2}>0$, we have that
		\begin{align*}
			-\frac{\theta}{2}-\frac{\theta}{2}\|y\|^{2}+\frac{1}{2\mu}\|y\|^{2}\leq \frac{1}{2\mu}\left(\frac{1-\mu\theta}{2}\right)\|y\|^{2}+g(x_{0})+\frac{1}{\mu}\|x_{0}\|^{2}
			+\frac{1}{\mu(1-\mu\theta)}\|x\|^{2}+\frac{1}{2\mu}\|x\|^{2}.
		\end{align*}
		Rearranging, we obtain 
		\begin{align*}
			\frac{1}{4}\left(\frac{1-\mu\theta}{\mu}\right)\|y\|^{2}\leq g(x_{0})+\frac{1}{\mu}\|x_{0}\|^{2}+\left(\frac{1}{\mu(1-\mu\theta)}+\frac{1}{2\mu}\right)\|x\|^{2}+\frac{\theta}{2},
		\end{align*}
		which implies that
		\begin{align*}
			\|y\|^{2}\leq 4\left[ \frac{\mu}{1-\mu\theta}g(x_{0})+\frac{1}{1-\mu\theta}\|x_{0}\|^{2}+\left(\frac{1}{(1-\mu\theta)^{2}}+\frac{1}{2(1-\mu\theta)}\right)\|x\|^{2}+\frac{\mu\theta}{2(1-\mu\theta)}\right].
		\end{align*}
		Note that $\frac{\mu}{1-\mu\theta}<1$ (since $\mu<\frac{1}{1+\theta}$), $\frac{1}{1-\mu\theta}<2$, and $\frac{\mu\theta}{2(1-\mu\theta)}<\frac{1}{2}$ (since $\mu<\frac{1}{2\theta}$). Thus,  $\|y\|^{2}\leq 4(\max\{0,g(x_{0})\}+2\|x_{0}\|^{2}+5\|x\|^{2}+1/2)$. \qed
	\end{proof}
\begin{proposition}
\label{prop_prox_w_convex}
Let $S$ be a nonempty closed convex and bounded subset of $\R^{m}$ and let $g\colon\R^{m}\rightarrow\R$ be a $\rho$-weakly convex and lower semicontinuous function. Then, there exist $\ell\geq 0$ and $\mu_{0}\in\left]0,1\right[$ (with $\mu_{0}$ depending on $\rho$) such that for all $\mu\in\left]0,\mu_{0}\right[$ and $x\in S$,
\begin{align*}
\|\nabla g_{\mu}(x)\|=\dfrac{1}{\mu}\|x-\prox_{\mu g}(x)\|\leq \ell.
\end{align*}
\end{proposition}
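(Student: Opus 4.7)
The plan is to combine two ingredients: a uniform bound on the proximal points $\prox_{\mu g}(x)$ for $(\mu,x)\in(0,\mu_{0})\times S$, supplied by Proposition~\ref{prop_1.1}, together with local boundedness of the convex subdifferential of the regularized function $h := g + \frac{\rho}{2}\|\cdot\|^{2}$. Since $g$ is real-valued and $\rho$-weakly convex, $h\in\Gamma_{0}(\R^{m})$ is finite on all of $\R^{m}$, hence continuous and locally Lipschitz; in particular, for every $R>0$ there exists $L_{R}\geq 0$ such that $\sup_{\|y\|\leq R}\sup_{w\in\partial h(y)}\|w\|\leq L_{R}$. By the definition of the weakly convex subdifferential used in the paper, $\partial g(y)=\partial h(y)-\rho y$, so any $v\in\partial g(y)$ with $\|y\|\leq R$ satisfies $\|v\|\leq L_{R}+\rho R$.

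First, I would choose $\mu_{0}\in\left]0,1\right[$ smaller than both $1/(2\rho)$ and the threshold produced by Proposition~\ref{prop_1.1}. Since $S$ is bounded, setting $B:=\sup_{x\in S}\|x\|<\infty$, the conclusion of Proposition~\ref{prop_1.1} yields a constant $R>0$ such that $\|\prox_{\mu g}(x)\|\leq R$ for all $x\in S$ and $\mu\in\left]0,\mu_{0}\right[$. Next, since $\mu<1/\rho$, the functional $y\mapsto g(y)+\frac{1}{2\mu}\|y-x\|^{2}$ is strongly convex (it equals $h(y)$ plus a strongly convex quadratic), with unique minimizer $y_{\mu}(x):=\prox_{\mu g}(x)$. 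The Fermat rule then gives $\frac{x-y_{\mu}(x)}{\mu}\in\partial g(y_{\mu}(x))$, and combined with the subdifferential bound above,
$$
\frac{1}{\mu}\|x-\prox_{\mu g}(x)\|\leq L_{R}+\rho R=:\ell.
$$
The asserted equality $\|\nabla g_{\mu}(x)\|=\frac{1}{\mu}\|x-\prox_{\mu g}(x)\|$ then follows from \cite[Theorem~3.4(d)]{MR1230710}, already invoked earlier in the paper.

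The principal obstacle is guaranteeing that the proximal points $\prox_{\mu g}(x)$ remain uniformly bounded as $\mu\to 0^{+}$ and $x$ ranges over $S$. This is exactly the content of Proposition~\ref{prop_1.1}, whose proof rests on the fact that a proper l.s.c.\ weakly convex function admits an affine minorant and hence a quadratic lower bound. Once this uniform bound is secured, the rest is a routine application of convex subdifferential calculus, made transparent in finite dimension by the automatic continuity of finite-valued convex functions.
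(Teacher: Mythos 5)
Your proof is correct, and it reaches the conclusion by a more direct route than the paper's. Both arguments hinge on Proposition~\ref{prop_1.1} and both ultimately bound a subgradient of $g$ at the proximal point $\prox_{\mu g}(x)$, but they localize that point differently. The paper first derives the intermediate estimate $\|x-\prox_{\mu g}(x)\|\leq\sqrt{\mu}\,\widetilde{m}$ via an energy inequality (comparing the value of $y\mapsto g(y)+\tfrac{1}{2\mu}\|y-x\|^{2}$ at $\prox_{\mu g}(x)$ and at $x$, which requires first proving that $g$ is Lipschitz on bounded sets and re-using the affine minorant of $g+\tfrac{\rho}{2}\|\cdot\|^{2}$); it then shrinks $\mu_{0}$ so that $\prox_{\mu g}(x)$ lands in the enlargement $\widetilde{S}=\{y:d(y,S)\leq 1\}$ and concludes from $\nabla g_{\mu}(x)\in\partial g(\prox_{\mu g}(x))$ together with the Lipschitz continuity of $g$ on $\widetilde{S}$. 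You instead observe that, because $S$ is bounded, Proposition~\ref{prop_1.1} already confines $\prox_{\mu g}(x)$ to a fixed ball of radius $R$ uniformly in $x\in S$ and $\mu\in\left]0,\mu_{0}\right[$, and that $\partial h$ is bounded on that ball since $h=g+\tfrac{\rho}{2}\|\cdot\|^{2}$ is a finite-valued convex function on $\R^{m}$, hence locally Lipschitz; the relation $\partial g=\partial h-\rho(\cdot)$ and the Fermat rule then finish in one step. This bypasses the paper's intermediate $O(\sqrt{\mu})$ estimate and the set $\widetilde{S}$ entirely, and the intermediate estimate is in any case weaker than the final conclusion $\|x-\prox_{\mu g}(x)\|\leq\mu\ell$, so nothing is lost. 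The two proofs rest on the same two facts (the uniform prox bound of Proposition~\ref{prop_1.1} and local Lipschitz continuity of the convexified function); yours simply assembles them with less machinery.
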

\begin{proof}
By Proposition~\ref{prop_1.1}, there exist $\widetilde{\mu}_{0}\in]0,1[$ and $M\geq 0$ such that for all $\mu\in\left]0,\widetilde{\mu}_{0}\right[$,
\begin{align*}
\|\prox_{\mu g}(x)\|^{2}\leq 4(M+5\|x\|^{2}) \textrm{ for all } x\in \mathbb{R}^m.
\end{align*}
Let us define $\widetilde{S}:=\{x\in\R^{m}:d(x,S)\leq 1\}$. Since $\widetilde{S}$ is nonempty closed, convex, and bounded, there exists $\widetilde{\ell}\geq 0$ such that $g+\frac{\rho}{2}\|\cdot\|^{2}$ is $\widetilde{\ell}$-Lipschitz on $\widetilde{S}$ and there exists $\widetilde{M}\geq 0$ such that $\|x\|\leq \widetilde{M}$ for all $x\in \widetilde{S}$. Let $x,y\in\widetilde{S}$. Then,
\begin{align*}
|g(x)-g(y)|&\leq | g(x)-g(y)+\dfrac{\rho}{2}\|x\|^{2}-\dfrac{\rho}{2}\|y\|^{2}|+\dfrac{\rho}{2}|\|x\|^{2}-\|y\|^{2}|\\
&\leq \widetilde{\ell}\|x-y\|+\dfrac{\rho}{2}|\|x\|+\|y\||\cdot|\|x\|-\|y\||\\
&\leq \widetilde{\ell}\|x-y\|+\rho\widetilde{M}\|x-y\|\\
&=(\widetilde{\ell}+\rho\widetilde{M})\|x-y\|.
\end{align*}
Hence, $g$ is Lipschitz on $\widetilde{S}$. Similarly, $g$ is Lipschitz in $S$.
Now, for all $x\in S$ and $\mu\in ]0,\widetilde{\mu}_{0}[$,
\begin{align*}
\dfrac{1}{2\mu}\|x-\prox_{\mu g}(x)\|^{2}&\leq g(x)-g(\prox_{\mu g}(x))\\
&\leq g(x)+\frac{\rho}{2}\|\prox_{\mu g}(x)\|^{2}-\langle x^{*},\prox_{\mu g}(x)\rangle-\beta\\
&\leq g(x)+2\rho(M+5\|x\|^{2})+\|x^{*}\|\|x-\prox_{\mu g}(x)\|+\|x^{*}\|\|x\|-\beta\\
&\leq g(x)+\mu\|x^{*}\|^{2}+\frac{1}{4\mu}\|x-\prox_{\mu g}(x)\|^{2}+2\rho(M+5\|x\|^{2})+\|x^{*}\|\|x\|-\beta,
\end{align*}
since $ab\leq \frac{c^{2}}{2}a^{2}+\frac{1}{2c^{2}}b^{2}$ for $c=\sqrt{2\mu}$. Hence, for all $x\in S$ and $\mu\in\left]0,\widetilde{\mu}_{0}\right[$, we have
\begin{align*}
\|x-\prox_{\mu g}(x)\|^{2}\leq 4\mu(g(x)+\mu\|x^{*}\|^{2}+2\rho(M+5\|x\|^{2})+\|x^{*}\|\|x\|-\beta).
\end{align*}
Since $g$ is Lipschitz in $S$ and $S$ is bounded, the right-hand side of the latter inequality is uniformly bounded in $S$. Thus, we can to find $\widetilde{m}>0$ such that for all $x\in S$ and $\mu\in\left]0,\widetilde{\mu}_{0}\right[$
\begin{align*}
\|x-\prox_{\mu g}(x)\|\leq \sqrt{\mu}\widetilde{m}.
\end{align*}
Set $\mu_{0}=\min\{\widetilde{\mu}_{0},\frac{1}{\widetilde{m}^{2}}\}\in]0,1[$. Then, for all $x\in S$ and $\mu\in\left]0,\mu_{0}\right[$, $\|x-\prox_{\mu g}(x)\|\leq 1$, whence $\prox_{\mu g}(x)\in \widetilde{S}$.  Finally, the result follows from the formula $\nabla g_{\mu}(x)\in\partial g(\prox_{\mu g}(x))$ for all $x\in\R^{m}$ and the $\ell$-Lipschitz continuity of $g$ over in $\widetilde{S}$. \qed
\end{proof}

\subsection{Proof of Proposition \ref{prop_prob_aux_w}}\label{Annex-C}
\vspace{-3mm}
\emph{Proof of Proposition \ref{prop_prob_aux_w}}:  The set defined in \eqref{def_k_w} is nonempty since $N-1$ is in that set. Hence $k$ is well defined. Note that $\nabla \phi(p)=\left(\frac{-\alpha_{i}}{(2\mu p_{i}-1)^{2}}\right)_{i=1}^{N}$ for all $p\in\R^{N}$. Then, since $-\phi$ is convex on $\Delta_{N}$, by the definition of $\Delta_{N}$ and the KKT's conditions, it follows that it is enough to prove that there exists $\tau\in \R$ and $(\eta_{i})_{i=1}^{N}\in\R_{+}^{N}$ such that $\sum_{i=1}^{N}p_{i}=1$ and 
		\begin{equation}
			\label{cond_kkt_w}
			\alpha_{i}(2\mu p_{i}-1)^{-2}+\tau-\eta_{i}=0, \eta_{i}p_{i}=0,  p_{i}\geq 0 \textrm{ for all } i\in\{1,\ldots,N\},
		\end{equation}
		where $p\in\R^{N}$ is defined by \eqref{sol_prob_aux_w}. Consider $\tau:=\frac{-1}{(N-k-2\mu)^{2}}(\sum_{j\notin I_{k}}\sqrt{\alpha_{j}})^{2}\in\R$ and $(\eta_{i})_{i=1}^{N}\in \R^{N}$ defined by $\eta_{i}=\tau+\alpha_{i}$ if $i\in I_{k}$ and $\eta_{i}=0$ if $i\notin I_{k}$. Thus, we have the second condition in \eqref{cond_kkt_w}. Let us now prove the first condition in \eqref{cond_kkt_w}. Let $i\in\{1,\ldots,N\}$. If $i\in I_{k}$, then $p_{i}=0$ and $\frac{\alpha_{i}}{(2\mu p_{i}-1)^{2}}+\tau-\eta_{i}=\alpha_{i}+\tau-\eta_{i}=0$. If $i\notin I_{k}$, then by \eqref{sol_prob_aux_w},  $(1-2\mu p_{i})^{2}=(N-k-2\mu)^{2}\alpha_{i}(\sum_{j\notin I_{k}}\sqrt{\alpha_{j}})^{-2}$ and hence
		\begin{align*}
			\alpha_{i}(1-2\mu p_{i})^{-2}+\tau-\eta_{i}=(N-k-2\mu)^{-2}(\sum_{j\notin I_{k}}\sqrt{\alpha_{j}})^{2}+\tau-\eta_{i}=-\eta_{i}=0,
		\end{align*}
		which proves the first condition in \eqref{cond_kkt_w}.
		We claim that $\eta_{i}\geq 0$ for all $i\in I_{k}$ (note that if $k=0$, the latter is direct since $I_{0}=\emptyset$, so in order to prove this claim we assume that $k>0$). Let $i\in I_{k}$. Then $\alpha_{i}\geq \alpha_{\ell_{k}}$. Now, by definition of $k$, we have that $k-1$ is not in the set in \eqref{def_k_w}, that is,
		\begin{align*}
			(N-k+1-2\mu)\sqrt{\alpha_{\ell_{k}}}\geq \sum_{j\notin I_{k-1}}\sqrt{\alpha_{j}} = \sqrt{\alpha_{\ell_{k}}}+\cdots+\sqrt{\alpha_{\ell_{N}}},
		\end{align*}
		which yields that $(N-k-2\mu)\sqrt{\alpha_{i}}\geq (N-k-2\mu)\sqrt{\alpha_{\ell_{k}}}\geq \sqrt{\alpha_{\ell_{k+1}}}+\cdots+\sqrt{\alpha_{\ell_{N}}}=\sum_{j\notin I_{k}}\sqrt{\alpha_{j}}$.  Hence, $\eta_{i}=\tau+\alpha_{i}=\alpha_{i}-\frac{1}{(N-k-2\mu)^{2}}(\sum_{j\notin I_{k}}\sqrt{\alpha_{j}})^{2}\geq 0$. We claim now that $p_{i}\geq 0$ for all $i\notin I_{k}$. Let $i\notin I_{k}$. Then $\alpha_{i}\leq \alpha_{\ell_{k+1}}$ and therefore
		\begin{equation*}
			\frac{(N-k-2\mu)\sqrt{\alpha_{i}}}{\sum_{j\notin I_{k}}\sqrt{\alpha_{j}}}\leq \frac{(N-k-2\mu)\sqrt{\alpha_{\ell_{k+1}}}}{\sum_{j\notin I_{k}}\sqrt{\alpha_{j}}}<1,
		\end{equation*}
		where the last inequality is by the definition of $k$. Thus, from \eqref{sol_prob_aux_w}, we obtain that $p_{i}\geq 0$. Finally, since $|I_{k}^{c}|=N-k$, we deduce from \eqref{sol_prob_aux_w}, that $\sum_{i=1}^{N}p_{i}=1$. \qed

\end{document}